\numberwithin{equation}{section}
\numberwithin{figure}{section}
\newtheorem {theorem}{Theorem}[section]
\newtheorem {proposition}[theorem]{Proposition}
\newtheorem {lemma}[theorem]{Lemma}
\newcommand{\nc}{\newcommand}
\nc{\grad}{{\mbox{grad}\,}}
\nc{\R}{{\mathbb R}}
\nc{\Rn}{{\mathbb R}^n}
\nc{\N}{{\mathbb N}}
\nc{\Z}{{\mathbb Z}}
\nc{\K}{{\cal K}}
\nc{\kpo}{(\K^d_{2,gp})_o}
\nc{\krep}{\K^d_{3,gp}}
\nc{\BP}{\mathbb{P}}
\nc{\BQ}{\mathbb{Q}}
\nc{\BE}{\mathbb{E}}
\nc{\cH}{\mathcal{H}}
\nc{\cL}{\mathcal{L}}
\nc{\cE}{\mathcal{E}}
\nc{\BS}{\mathbb{S}}
\nc{\LL}{L^\circ}
\nc{\bB}{B}
\nc{\sph}{\mathbb{S}^{n-1}}
\nc{\Q}{\mathbb{Q}}
\nc{\cK}{\mathcal{K}}
\nc{\vaps}{\varepsilon}
\nc{\cV}{\mathcal{V}}
\nc{\cP}{\mathcal{P}}
\nc{\cR}{\mathcal{R}}
\nc{\cS}{\mathcal{S}}
\nc{\cB}{\mathcal{B}}
\nc{\fB}{\mathfrak{B}}
\nc{\cC}{\mathcal{C}}
\nc{\ver}{{\rm{vert\, }}}
\nc{\iF}{\,\raisebox{0.0pt}{$\square$}\,}
\nc{\ind}{\mathbf{1}}	
\nc{\fed}{\llcorner}
\DeclareMathOperator{\diam}{diam}
\begin{document}




\title{\bfseries A reverse Minkowski-type inequality}

\author{Daniel Hug\footnotemark[1]\; and K\'aroly B\"or\"oczky \footnotemark[2]}

\renewcommand{\thefootnote}{\fnsymbol{footnote}}
\footnotetext[1]{Karlsruhe Institute of Technology (KIT), Department of Mathematics, D-76128 Karlsruhe, Germany. E-mail: daniel.hug@kit.edu}

\footnotetext[2]{ Alfr\'ed R\'enyi Institute of Mathematics, Hungarian Academy of Sciences, Reltanoda u.~13-15, H-1053 Budapest, Hungary, and Department of Mathematics, Central European University, Nador u 9, H-1051, Budapest, Hungary.
E-mail: boroczky.karoly.j@renyi.mta.hu, Supported in part by
NKFIH grants 116451, 121649, 132002 and 129630.}

\maketitle

\begin{abstract}
The famous Minkowski inequality provides a sharp lower bound for the mixed volume $V(K,M[n-1])$ of two
convex bodies $K,M\subset\R^n$  in terms of powers of the volumes of the individual
bodies $K$ and $M$. The special case where $K$ is the unit ball yields the isoperimetric inequality.
In the plane, Betke and Weil (1991) found a sharp upper bound for the mixed area of $K$ and $M$ in terms
of the perimeters of $K$ and $M$. We extend this result to general dimensions by proving a sharp upper
bound for the mixed volume $V(K,M[n-1])$ in terms of the mean width of $K$ and the surface area of $M$.
The equality case is completely characterized. In addition, we establish a stability improvement of this
and related geometric inequalities of isoperimetric type.
\footnotetext[3]{{\em Key words and phrases.} {Geometric inequality, Brunn-Minkowski theory, Minkowski inequality, mean width, surface area, mixed volume, stability result.}}
\footnotetext[4]{{\em 2010 Mathematics Subject Classification.} Primary:  52A20, 52A38, 52A39, 52A40; secondary: 60D05, 52A22.}
\end{abstract}

\maketitle


\bibliographystyle{amsplain}





\section{Introduction}

Mixed volumes of convex bodies in Euclidean space $\R^n$ are fundamental functionals which encode geometric
information about the involved convex bodies in a non-trivial way.
Let $\cK^n$ denote the space of nonempty compact convex subsets of $\R^n$. For $K,M\in\cK^n$ and $\alpha,\beta\ge 0$, the volume $V(\alpha K+\beta M)$  of the Minkowski sum $\alpha K+\beta M$ has the polynomial expansion
\begin{equation}\label{expansionpol}
V(\alpha K+\beta M)=\sum_{i=0}^n\binom{n}{ i}V(K[i],M[n-i])\alpha^{i}\beta^{n-i},
\end{equation}
by which the special mixed volumes $V(K[i],M[n-i])$ involving $i$ copies  of $K$ and $n-i$ copies  of $M$, $i\in\{0,\ldots,n\}$,  are uniquely determined.  We refer to \cite{S14} for a general introduction to mixed volumes and a thorough study of
their properties. In the following, we simply write $V(K,M[n-1])$ if $K$ appears with multiplicity one. In particular,  \eqref{expansionpol} implies that
$$
nV(M,K[n-1])=\lim_{\varepsilon\to 0+}\frac{1}{\varepsilon}(V(K+\varepsilon M)-V(K)).
$$
This explains why $nV(B^n,K[n-1])$ yields the surface area $F(K)$ of $K$ if $M=B^n$ is the Euclidean unit ball. More generally, the special
choice $M=B^n$ in \eqref{expansionpol}  leads to the intrinsic volumes
$$
V_i(K)= \frac{1}{\kappa_{n-i}}\binom{n}{i}  V(K[i],B^n[n-i]),\qquad i\in\{0,\ldots,n\},
$$
where $\kappa_m$ is the volume of $B^m$ in $\R^m$. We note that $V_n=V$ is the volume functional, and $V_1(K)=(n/\kappa_{n-1})V(K,B^n[n-1])=n\kappa_n/(2\kappa_{n-1})w(K)$, where $w(K)$ is the mean width of $K$.
Furthermore, we have $2V_{n-1}(K)=nV(B^n,K[n-1])= F(K)$, and $V_{i}(K)=\cH^{i}(K)$ if $K$ is $i$-dimensional. Here,   $\cH^i$ is
 the $i$-dimensional Hausdorff-measure, which is normalized in such a way that it coincides with the Lebesgue measure on $\R^i$.

One of the fundamental results for mixed volumes is Minkowski's inequality
\begin{equation}
\label{Minkowski-ineq}
V(K,M[n-1])^n\geq V(K)V(M)^{n-1} \mbox{ \ for $K,M\in\cK^n$}.
\end{equation}
If ${\rm int}\,K,\,{\rm int}\, M\neq\emptyset$ (that is, $K,M$ have nonempty interiors), then equality holds if and only if $K$ and $M$ are homothetic  (we refer to \cite{S14} for notions and results  in the Brunn-Minkowski theory which are used in the following without further explanation).

As a planar and reverse counterpart of the Minkowski inequality \eqref{Minkowski-ineq},
Betke and Weil proved the following theorem (see \cite[Theorem 1]{BW}) which yields a sharp
upper bound for the mixed area of $K, M \in\K^2$ in terms of the perimeters $F(K),F(M)$ of $K$ and $M$.

\begin{theorem}[Betke, Weil (1991)]
\label{Betke-Weil}
If $K, M \in\K^2$, then
$$
  V(K, M) \leq \frac 18\, F(K)\, F(M)
$$
with equality if and only if $K$ and $M$ are orthogonal (possibly degenerate) segments.
\end{theorem}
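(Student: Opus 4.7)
My approach is to represent the mixed area through the surface area measure of $M$ and reduce the theorem to the sharp geometric inequality that the circumradius of any planar convex body is at most one quarter of its perimeter. Using the classical formula
$$
V(K,M)=\tfrac{1}{2}\int_{\BS^1} h_K(u)\,dS_M(u)
$$
together with $\int_{\BS^1} u\,dS_M(u)=0$, the right-hand side is invariant under translation of $K$. Assuming $F(M)>0$ (the case $F(M)=0$ being trivial), the Borel probability measure $\mu:=S_M/F(M)$ on $\BS^1$ has vanishing first moment, and the desired inequality becomes
$$
\int_{\BS^1} h_K\,d\mu \le \tfrac{1}{4}F(K).
$$

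To prove this, translate $K$ so that its circumcenter lies at $0$; then $h_K(u)\le r^\ast(K)$ for every $u\in\BS^1$, where $r^\ast(K)$ is the circumradius of $K$. Since $\int h_K\,d\mu$ is invariant under translation of $K$, we obtain $\int h_K\,d\mu\le r^\ast(K)$. It therefore suffices to prove the purely geometric inequality $r^\ast(K)\le F(K)/4$ for $K\in\cK^2$, with equality if and only if $K$ is a (possibly degenerate) segment.

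For this I would place the circumcenter of $K$ at the origin and invoke the standard characterization that the contact set $C:=\{x\in K:|x|=r^\ast(K)\}\subset\partial K$ satisfies $0\in\conv C$. By Carath\'eodory, one can select from $C$ either (i) two antipodal points $\pm x_0$, or (ii) three distinct points forming an inscribed triangle containing $0$. In case (i), $\partial K$ decomposes into two arcs joining $\pm x_0$, each of length at least $2r^\ast$, so $F(K)\ge 4r^\ast$ with equality iff $K=[-x_0,x_0]$. In case (ii), writing the three arcs subtended on $r^\ast\BS^1$ as $\beta_1,\beta_2,\beta_3>0$ with $\beta_i\le\pi$ and $\beta_1+\beta_2+\beta_3=2\pi$, the triangle perimeter equals $2r^\ast\sum_i\sin(\beta_i/2)$; the extreme points of this feasible polytope are the permutations of $(0,\pi,\pi)$ (all excluded by $\beta_i>0$) at which $\sum_i\sin(\beta_i/2)=2$, so strict concavity of $t\mapsto\sin(t/2)$ on $(0,\pi)$ forces $\sum_i\sin(\beta_i/2)>2$ and hence the triangle perimeter is strictly greater than $4r^\ast$. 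Since $F(K)\ge$ perimeter of any inscribed polygon, we conclude $F(K)\ge 4r^\ast(K)$ with equality iff $K$ is a segment.

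Chaining the two estimates gives $V(K,M)\le (F(M)/2)\,r^\ast(K)\le F(K)F(M)/8$. For the equality characterization, the second inequality forces $K$ to be a segment $[-av,av]$ (after translation), while the first forces the support of $\mu$ to lie in $\{u\in\BS^1: h_K(u)=r^\ast(K)\}=\{\pm v\}$; the vanishing first moment then pins down $\mu=\tfrac{1}{2}(\delta_v+\delta_{-v})$, which by the uniqueness part of Minkowski's existence theorem in the plane corresponds to $M$ being a segment orthogonal to $v$. I expect the main obstacle to be the careful treatment of degenerate configurations in case (ii) of the geometric inequality: when $C$ is not finite (for instance, contains an arc of $r^\ast\BS^1$), one must extract three suitably located extreme points so that both the inscribed-polygon argument and the concavity bound apply with the correct strict inequality.
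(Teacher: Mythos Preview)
Your overall strategy coincides with the paper's: the paper proves the $n$-dimensional generalization (Theorem~\ref{mvolupper}) by writing $V(K,M[n-1])=\frac{1}{n}\int h_K\,dS_{n-1}(M,\cdot)$, centering $K$ at its circumcenter to get $h_K\le R(K)$, and then invoking Linhart's inequality $V_1(K)\ge 2R(K)$ (Theorem~\ref{thmbkr0}(i)); in the plane this is exactly your $r^\ast(K)\le F(K)/4$. The equality analysis is likewise the same: equality in Linhart forces $K$ to be a segment, and then equality in $\int h_K\,d\mu\le R(K)$ forces $S_{M}$ to be concentrated on $\{\pm v\}$, hence $M$ is a segment orthogonal to $K$.

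Where you genuinely diverge is in the proof of the key lemma $r^\ast(K)\le F(K)/4$. The paper establishes $V_1(K)\ge 2R(K)$ in all dimensions via spherical Dirichlet--Voronoi cells associated with contact points $x_1,\dots,x_k\in K\cap\partial B^n$ and the monotonicity Lemma~\ref{monotonicity}, an argument designed to carry the stability improvement (Theorem~\ref{thmbkr0}(ii)). Your argument is instead a direct planar computation: in the antipodal case the two boundary arcs each have length $\ge 2r^\ast$, and in the triangle case the concavity of $\sin(t/2)$ on $[0,\pi]$ gives $\sum_i\sin(\beta_i/2)>2$ strictly on the open feasible region. This is more elementary and entirely adequate for the planar statement, but it does not extend to $n\ge 3$ and does not yield a stability remainder. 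Your concern about infinite contact sets $C$ is not a real obstacle: Carath\'eodory already delivers either two antipodal points or three points whose (closed) convex hull contains $0$; if $0$ lies on an edge you are back in case~(i), and otherwise all $\beta_i\in(0,\pi)$, so the strict concavity argument applies without further case analysis.
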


Note that for $K, M \in\K^2$ we have $F(K)\, F(M)=4\,V_1(K)\, V_1(M)$. We extend Theorem \ref{Betke-Weil} to general dimensions and thus obtain the following reverse Minkowski-type inequality.

\begin{theorem}\label{mvolupper}
If $K,M\in\cK^n$, then
$$
V(K,M[n-1])\le \frac{1}{n}\,V_1(K)\,V_{n-1}(M);
$$
if $\text{\rm dim}(K)\ge 1$ and $\text{\rm dim}(M)\ge n-1$, then equality holds  if and only if $K$ is a segment and $M$ is contained in a hyperplane orthogonal to $K$.
\end{theorem}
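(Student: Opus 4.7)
The plan is to reduce the inequality to a sharp circumradius estimate via the support-function representation of the mixed volume.  I would begin from the standard identity
$$
nV(K,M[n-1]) = \int_{\mathbb{S}^{n-1}} h_K(u)\,dS_{n-1}(M,du),
$$
noting that $S_{n-1}(M,\mathbb{S}^{n-1}) = 2V_{n-1}(M)$ and that the Minkowski relation $\int u\,dS_{n-1}(M,du) = 0$ makes the right-hand side translation invariant in $K$.

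I would then translate $K$ so that the centre of its smallest enclosing ball is the origin.  With $R=R(K)$ denoting the circumradius of $K$, this forces $h_K(u)\le R$ pointwise on $\mathbb{S}^{n-1}$, and substitution yields
$$
nV(K,M[n-1])\le R\cdot S_{n-1}(M,\mathbb{S}^{n-1}) = 2R(K)\,V_{n-1}(M).
$$
The theorem then follows from the sharp comparison
$$
2R(K)\le V_1(K),
$$
with equality precisely when $K$ is a (possibly degenerate) segment.  This is the key lemma and the main technical input.

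To prove the key lemma, the classical characterisation of the smallest enclosing ball furnishes points $p_1,\ldots,p_m\in K\cap R\,\mathbb{S}^{n-1}$ with $0\in\conv\{p_1,\ldots,p_m\}$.  Inclusion monotonicity of $V_1$ reduces the problem to showing $V_1(P)\ge 2R$ for the inscribed polytope $P:=\conv\{p_1,\ldots,p_m\}$.  When two of the $p_i$ are antipodal, $P$ contains the diameter segment $[-p_i,p_i]$ and the bound follows since $V_1$ of a segment equals its length.  In the remaining case one has $\diam P<2R$, so the easier fact $V_1(K)\ge\diam(K)$ (a direct consequence of monotonicity of $V_1$ under projection onto a diameter line) is not enough; after a Carath\'eodory reduction to at most $n+1$ vertices, one needs a genuinely higher-dimensional argument, for instance by induction on dimension via hyperplane sections through the origin.

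For the equality statement, equality in the theorem requires equality simultaneously in the pointwise bound $h_K\le R$ on the support of $S_{n-1}(M,\cdot)$ and in the key lemma.  The latter forces $K$ to be a segment $[-(\ell/2)e,(\ell/2)e]$ of length $\ell$, for which $h_K(u)=(\ell/2)|\langle e,u\rangle|$ attains its maximum $R=\ell/2$ only at $u=\pm e$; hence $S_{n-1}(M,\cdot)$ must be concentrated on $\{\pm e\}$, equivalently $M$ lies in a hyperplane orthogonal to $e$.  The principal obstacle of the whole argument is the non-antipodal case of the key lemma, since the mean-width-type quantity $V_1(P)$ cannot be controlled in terms of $\diam(P)$ alone in that regime.
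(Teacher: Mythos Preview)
Your reduction is correct and is exactly the argument the paper gives: translate $K$ so that its circumball is centred at the origin, bound $h_K\le R(K)$ inside the integral representation of $V(K,M[n-1])$, and then invoke $2R(K)\le V_1(K)$. Your treatment of the equality case is likewise the same as the paper's.

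The genuine gap is in the key lemma $V_1(K)\ge 2R(K)$ itself, which you correctly flag as the principal obstacle but do not actually prove. Your suggestion for the non-antipodal case---``induction on dimension via hyperplane sections through the origin''---does not go through in any evident way: a hyperplane section of $P=\conv\{p_1,\ldots,p_m\}$ through $o$ has vertices on edges of $P$, typically strictly inside the circumball, so the section's circumradius is in general smaller than $R$ and there is nothing for the inductive hypothesis to bite on; nor is it clear how $V_1$ of such a section would control $V_1(P)$.

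The paper supplies this lemma as Theorem~\ref{thmbkr0}(i) (originally due to Linhart). The argument there is quite different from your sketch: one takes contact points $x_1,\ldots,x_k\in K\cap\BS^{n-1}$ with $o$ in their convex hull, partitions $\BS^{n-1}$ into the spherical Dirichlet--Voronoi cells $\Delta_i$ of these points, notes that each $\Delta_i$ lies in the half-sphere $B(x_i,\tfrac{\pi}{2})$, and then applies a monotonicity lemma (Lemma~\ref{monotonicity}) asserting that for any compact starshaped $\Pi\subset B(z,\alpha)$ the average of $\langle z,\cdot\rangle$ over $\Pi$ is at least its average over the full cap $B(z,\alpha)$. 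Summing over $i$ with $\alpha=\tfrac{\pi}{2}$ and using \eqref{intratio} gives $V_1([x_1,\ldots,x_k])\ge 2$; the equality analysis then forces each $\Delta_i=B(x_i,\tfrac{\pi}{2})$, hence $k=2$ and $x_2=-x_1$.
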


A crucial ingredient for the proof of Theorem \ref{mvolupper} is a lower bound for
$V_1(K)$ in terms of the circumradius of $K$, which was first proved by Linhart \cite{Linhart} (see also  Theorem \ref{thmbkr0} (i) below).

For Minkowski's inequality \eqref{Minkowski-ineq}  various stability versions  have been found, the first is due to Minkowski himself. Here we cite only two such results.  Groemer \cite{Groemer1990} proved that
if $K,M\in\cK^n$ with ${\rm int}\,K,\,{\rm int}\, M\neq\emptyset$ and $\varepsilon>0$ is sufficiently small, then
\begin{equation}
\label{Minkowskistabcond}
V(K,M[n-1])^n\leq (1+\varepsilon)V(K)V(M)^{n-1}
\end{equation}
implies that there exist $y,z\in \R^n$ and $\lambda>0$ such that
$$
 \lambda (K-z)\subset M-y\subset \left(1+\gamma\varepsilon^{\frac1{n+1}}\right)\lambda (K-z)
$$
where $\gamma>0$ depends only on $n$.

In addition, Figalli, Maggi, Pratelli \cite{FMP2010} showed that  \eqref{Minkowskistabcond} implies that
there is some $x\in \R^n$ such that
$$
\cH^n\left(M\Delta(x+\lambda K)\right)\leq \gamma \,\sqrt{\varepsilon}\, V(M)
$$
 where  $\lambda=(V(M)/V(K))^{1/n}$, $\Delta$ stands for the symmetric difference and $\gamma>0$ depends only on $n$.

 These stability results improve Minkowski's first inequality provided some information about the deviation of the
 shapes of $K$ and $M$ (up to homothety) is available and at the same time they provide additional information on how
 close  $K$ and $M$ are if almost equality holds in Minkowski's inequality.

We obtain the following stability version of the reverse Minkowski inequality given in Theorem~\ref{mvolupper}. Here and in the following, we write $R(K)$ for the circumradius of $K\in\cK^n$. Further, we denote by $h_K$ the support function of $K$ (see Section \ref{secpreliminaries}), hence $w_K(e)=h_K(e)+
h_K(-e)$ is the width of $K$ in direction $e\in\BS^{n-1}$, and we write $K|e^\perp$ for the orthogonal projection of $K$ to $e^\perp$. If $S$ is a nondegenerate segment parallel to $e\in\BS^{n-1}$, then we denote by $B_S^{n-1}$ the unit ball in the $(n-1)$-dimensional linear subspace orthogonal to $e$.

\begin{theorem}
\label{mvolupperstab}
Let $K,M\in\cK^n$ with $\dim(K)\ge 1$, $\dim(M)\ge n-1$. Suppose that
$$
V(K,M[n-1])\ge  (1-\varepsilon)\,\frac{1}{n}\,V_1(K)V_{n-1}(M)
$$
for some $\varepsilon\in [0,\varepsilon_0]$. Then there exist unit vectors
$e,v\in\BS^{n-1}$ and a segment $S$ of length $(2-\gamma_1\varepsilon^2)R(K)$ parallel to $e$
such that $w_M(v)\le \gamma_2\,r\,\varepsilon^{\frac{1}{4}}$,  $\langle e,v\rangle \ge 1-\gamma_2\sqrt{\varepsilon}$  and
$S\subset K\subset S+\gamma_2R(K)\sqrt{\varepsilon}B^{n-1}_S$,
 where $r$ is the maximal radius of an $(n-1)$-ball in $M\vert e^\perp$, and $\varepsilon_0,\gamma_1,\gamma_2>0$ are constants
depending on $n$.
\end{theorem}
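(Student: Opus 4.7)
The proof of Theorem \ref{mvolupper} is based on the chain
\begin{align*}
n V(K, M[n-1]) &= \int_{\BS^{n-1}} h_K(u)\, dS_{n-1}(M, u) \le R(K)\cdot 2V_{n-1}(M) \\
&\le V_1(K)\, V_{n-1}(M),
\end{align*}
where we translate $K$ so that its circumscribed ball is centred at the origin (hence $h_K\le R(K)$) and then invoke Linhart's inequality $V_1(K)\ge 2R(K)$ (Theorem \ref{thmbkr0}(i)). My plan is to extract quantitative stability at each step: the hypothesis forces both of the final two inequalities to be tight within $1+O(\varepsilon)$.

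Near-equality in Linhart triggers its stability form (Theorem \ref{thmbkr0}(i)), producing $e\in\BS^{n-1}$ and a segment $S\parallel e$ of length $(2-\gamma_1\varepsilon^2)R(K)$ with $S\subset K\subset S+\gamma R(K)\sqrt{\varepsilon}\,B_S^{n-1}$. Centering $S$ about the origin, this containment gives the pointwise bound
\[
h_K(u)\le R(K)|\langle u,e\rangle| + \gamma R(K)\sqrt{\varepsilon}\sqrt{1-\langle u,e\rangle^2} + \langle u,z\rangle
\]
for some $z\in\R^n$ accounting for the difference between the circumcentre and the segment midpoint. The linear term is annihilated by integration against $dS_{n-1}(M,\cdot)$ since $\int u\, dS_{n-1}(M,u)=0$, and combining with the hypothesis and Cauchy's projection formula $V_{n-1}(M|e^\perp)=\tfrac{1}{2}\int|\langle u,e\rangle|\,dS_{n-1}(M,u)$ yields
\[
\int_{\BS^{n-1}}\bigl(1-|\langle u,e\rangle|\bigr)\,dS_{n-1}(M,u)\le \gamma\sqrt{\varepsilon}\,V_{n-1}(M),
\]
which, by $1-t^2\le 2(1-|t|)$ and Cauchy--Schwarz, upgrades to $\int\sqrt{1-\langle u,e\rangle^2}\,dS_{n-1}(M,u)\le \gamma'\varepsilon^{1/4}V_{n-1}(M)$.

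For $v$, set $c^+:=\int_{\{\langle u,e\rangle>0\}}u\,dS_{n-1}(M,u)$ and $v:=c^+/|c^+|$. Cauchy's projection formula gives $\langle c^+,e\rangle=V_{n-1}(M|e^\perp)\ge(1-O(\sqrt{\varepsilon}))V_{n-1}(M)$, while the transverse component satisfies $|c^+-\langle c^+,e\rangle e|\le\int|u-\langle u,e\rangle e|\,dS_{n-1}(M,u)\le \gamma'\varepsilon^{1/4}V_{n-1}(M)$. A short computation yields $\langle v,e\rangle\ge 1-\gamma_2\sqrt{\varepsilon}$, as required.

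The main obstacle is the width bound $w_M(v)\le \gamma_2\,r\,\varepsilon^{1/4}$. The required convex-geometric lemma should read: if $\int(1-|\langle u,e\rangle|)\,dS_{n-1}(N,u)\le\delta\,V_{n-1}(N)$ and $N|e^\perp$ contains an $(n-1)$-ball of radius $r$, then $w_N(e)\le c_n\,r\,\sqrt{\delta}$. Two extreme cases indicate that $\sqrt{\delta}$ is the correct scaling: for a right cylinder over $B^{n-1}(r)$ of height $h$, the equatorial surface mass $h(n-1)\kappa_{n-1}r^{n-2}$ (bounded by Markov applied to the concentration estimate with threshold $\eta=1$) gives $h\le O(\delta r)$, better than needed; for a double cone with base $B^{n-1}(r)$ and apex $\pm he$ the equator is empty, but the top tilt $|\nabla g|\approx h/r$ forces $h^2\kappa_{n-1}r^{n-3}\lesssim\delta V_{n-1}(N)$, yielding $h\lesssim r\sqrt{\delta}$. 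A general $M$ requires simultaneous control of both the equatorial mass and the tilt contribution, combined with the concavity of the vertical fibre length $g-f$ on the projection. Applying this lemma with $\delta=O(\sqrt{\varepsilon})$, and noting that $M|v^\perp$ still contains an $(n-1)$-ball of radius comparable to $r$ (since $v$ is $O(\sqrt{\varepsilon})$-close to $e$), yields the claim.
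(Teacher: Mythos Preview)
Your overall framework matches the paper's: deduce near-equality in Linhart's inequality, invoke its stability form to obtain the segment $S$ and the direction $e$, and then show that $S_{n-1}(M,\cdot)$ is concentrated near $\{\pm e\}$ in the sense $\int(1-|\langle u,e\rangle|)\,S_{n-1}(M,du)\le c\sqrt{\varepsilon}\,V_{n-1}(M)$. Up to this point your argument is essentially the paper's. The gap is the final step.

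The lemma you propose --- ``if $\int(1-|\langle u,e\rangle|)\,S_{n-1}(N,du)\le\delta\,V_{n-1}(N)$ and $N|e^\perp$ contains an $(n-1)$-ball of radius $r$, then $w_N(e)\le c_n r\sqrt{\delta}$'' --- is \emph{false} for $n\ge 3$. The paper itself gives the counterexample (Remark after Proposition~\ref{surfacearea-stability}): with $\langle e,v\rangle=1-\varepsilon/2$ and an orthonormal basis $e_1=v$, $e\in\mathrm{lin}\{e_1,e_2\}$, set $M=[\pm\sqrt{\varepsilon}\,e_1,\pm\lambda e_2,\pm n e_3,\ldots,\pm n e_n]$. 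Then $1<r<n$, the concentration hypothesis holds with $\delta\sim\varepsilon$, yet $w_M(e)\gtrsim\lambda\sqrt{\varepsilon}$ is unbounded as $\lambda\to\infty$. The mechanism your cylinder/cone heuristics miss is that $M$ can be arbitrarily elongated in a direction of $e^\perp$ while still satisfying the concentration estimate, and a tiny tilt of that long axis produces unbounded $e$-width. Transferring the concentration estimate from $e$ to your $v=c^+/|c^+|$ does not help either: since $|v-e|\sim\varepsilon^{1/4}$, you would only get concentration $O(\varepsilon^{1/4})$ in direction $v$, losing a power even if the lemma held there.

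The paper's fix (Proposition~\ref{surfacearea-stability}) is to choose $v$ as the outer unit normal to the John ellipsoid $E\subset M\subset nE$ at the point $\tau e\in\partial E$; this choice automatically compensates for the tilt. The width bound is then proved in two substantial steps: first, successive Steiner symmetrizations of $M$ followed by passage to a double cone reduce the problem to bounding $t=\max_x\cH^1((x+\R e)\cap M)$, which is done by a direct normal-vector computation on the cone yielding $t\le c\,r\sqrt{\varepsilon}$; second, working on $M$ itself with the John ellipsoid, one bounds $\tan\angle(e,v)$ in terms of $t/r$ and $\sqrt{\varepsilon}$. This is the missing piece in your argument, and it is considerably more delicate than the sketch you give.
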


Note that under the assumptions of the theorem, $M$ is contained in a slab of width at most $ \gamma_2r\varepsilon^{\frac{1}{4}}$
and  this slab is almost orthogonal (in a quantitative sense) to the segment $S$. Furthermore, constants such as $\varepsilon_0$ or $\gamma_1,\gamma_2$ are used generically and are not necessarily the same when they are used in different statements.

A crucial ingredient in the proofs of Theorem~\ref{mvolupper} and Theorem~\ref{mvolupperstab} is the following result,
which is interesting in its own right.

\begin{theorem}\label{thmbkr0}
Let $K\in\cK^n$ with $\dim(K)\ge 1$.
\begin{enumerate}
\item[{\rm (i)}] Then $V_1(K)\ge 2 R(K)$,
with equality if and only if $K$ is a segment.
\item[{\rm (ii)}] If  $V_1(K)\le (2+\varepsilon) R(K)$ for some $\varepsilon\in[0,\varepsilon_0]$, then there exists a segment
$S$ of length
$(2-\gamma_1\varepsilon^2)R(K)$ such that $S\subset K\subset S+\gamma_2R(K)\sqrt{\varepsilon}B^{n-1}_S$,
where $\varepsilon_0,\gamma_1,\gamma_2>0$ are constants depending on $n$.
\end{enumerate}
\end{theorem}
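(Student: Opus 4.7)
Part~(i) is Linhart's inequality. After translating so that the smallest enclosing ball of $K$ is $RB^n$ centred at the origin, one has $K\subseteq RB^n$ and $0\in\conv(K\cap R\BS^{n-1})$; by Carath\'eodory pick contact points $x_1,\dots,x_k$ with $|x_i|=R$, $k\le n+1$, and weights $\lambda_i>0$ summing to $1$ satisfying $\sum_i \lambda_i x_i=0$. Monotonicity of $V_1$ reduces the inequality to $V_1(\conv\{x_i\})\ge 2R$, which via
$$
V_1(\conv\{x_i\})=\frac{1}{\kappa_{n-1}}\int_{\BS^{n-1}}\max_i\langle x_i,u\rangle\, d\cH^{n-1}(u)
$$
becomes an integral-geometric bound that follows from the barycentric relation $\sum_i \lambda_i x_i=0$ and $|x_i|=R$. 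The equality case forces the contact set to reduce to an antipodal pair $\{\pm Re\}$; then $[-Re,Re]\subseteq K$ together with $V_1(K)=2R$ forces $K=[-Re,Re]$.

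For Part~(ii), scale so $R(K)=1$ and translate the circumcentre to $0$: then $K\subseteq B^n$ and $V_1(K)\le 2+\varepsilon$. The argument proceeds in three steps. First, establish linear-in-$\delta$ stability of Linhart's equality case: if the contact points $\{x_i\}$ have $\delta:=\min_{i\ne j}|x_i+x_j|>0$, then $V_1(\conv\{x_i\})\ge 2+c_n\delta$ for some $c_n>0$. Combined with $V_1(K)\le 2+\varepsilon$ this produces contact points $p,q\in K\cap\BS^{n-1}$ with $|p+q|\le C_n\varepsilon$, whence $|p-q|=\sqrt{4-|p+q|^2}\ge 2-C'_n\varepsilon^2$. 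Second, set $L=|p-q|$, $e=(p-q)/L$: for any $y\in K$ at perpendicular distance $d$ from the line through $p$ and $q$ (orthogonal foot at parameter $t\in[0,L]$), the planar triangle $\conv\{p,q,y\}\subseteq K$ satisfies
$$
2\bigl(V_1(\conv\{p,q,y\})-L\bigr)=\sqrt{t^2+d^2}+\sqrt{(L-t)^2+d^2}-L \ge \frac{d^2}{L}
$$
by convexity of $s\mapsto\sqrt{s^2+d^2}$; combined with $V_1(K)-L\le(2+\varepsilon)-(2-C'_n\varepsilon^2)=O(\varepsilon)$ this forces $d\le\gamma_2\sqrt{\varepsilon}$. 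Third, take $S$ to be the longest chord of $K$ parallel to $e$: then $|S|\ge|p-q|\ge 2-\gamma_1\varepsilon^2$, and since $K$'s extent in direction $e$ equals $|S|$ by choice, orthogonal feet from points of $K$ to the line carrying $S$ land within $S$, so the thickness bound from the second step gives $K\subseteq S+\gamma_2\sqrt{\varepsilon}\,B^{n-1}_S$.

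The main obstacle is the first step: the linear-in-$\delta$ stability of Linhart's equality case. Since $\max_i\langle x_i,u\rangle$ is only piecewise linear in the $x_i$, one must carefully isolate the contribution of the (near-)antipodal pair to the integral over $\BS^{n-1}$ and show that non-antipodal contact configurations contribute at least $c_n\delta$ to the excess $V_1-2$. This sharp linear rate in $\delta$ is precisely what yields the quadratic $\varepsilon^2$ deficit in the length of $S$; any weaker rate would translate into a weaker length estimate. Once the first step is in place, the remaining two steps are essentially elementary, combining a planar perimeter computation with the containment $K\subseteq B^n$.
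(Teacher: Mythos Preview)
Your overall architecture matches the paper's: Linhart's contact-point setup for (i), and for (ii) a stability version of Linhart producing a long segment, followed by a planar triangle-perimeter bound for the thickness. Your Step~2 and the paper's final paragraph are essentially the same computation. The genuine gap is that you do not prove Step~1. This is not a minor omission: the linear-in-$\delta$ rate (equivalently, $V_1\ge 2+c_3\sqrt{\eta}$ where $\eta=2-\diam[x_1,\dots,x_k]$) is the substance of the theorem, and your sentence ``one must carefully isolate the contribution of the (near-)antipodal pair'' names the difficulty without supplying the mechanism. The paper's mechanism is concrete: choose a Voronoi cell $\Delta_1$ of measure $\ge n\kappa_n/k$, decompose it into geodesic cones $\Gamma_i$ over its facets $\digamma_i\subset(x_1-x_i)^\perp$, pick $\Gamma_2$ of measure $\ge\kappa_n/(n+1)$, and excise a fixed set $\Xi$ of half that measure so that on $\Gamma_2\setminus\Xi$ one has the pointwise bound $\langle x,x_1\rangle\ge\tfrac12\tau\sqrt{\eta}$. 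This pointwise bound---which uses both that $\Gamma_2\subset\{ \langle x,x_1-x_2\rangle\ge 0\}$ and that $\|x_1-x_2\|\le 2-\eta$---is what converts the angular defect into a uniform gain on a set of definite measure; two preparatory lemmas (a monotonicity inequality for $\int_\Pi\langle z,u\rangle\,d\cH^{n-1}$ over starshaped $\Pi\subset B(z,\alpha)$, and its quantitative version when $\alpha\le\frac\pi2-\varepsilon$) then upgrade this to $V_1\ge 2+c_3\sqrt{\eta}$. None of this is visible in your proposal.

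There is also a smaller error in Step~3. For a convex body the length of the longest chord parallel to a fixed direction $e$ is in general strictly smaller than the width $w_K(e)$ (take e.g.\ $K=\conv\{(0,0),(2,1),(1,2)\}$ with $e=(1,0)$), so your claim that ``$K$'s extent in direction $e$ equals $|S|$'' is false, and orthogonal feet from points of $K$ need not land in $S$. The clean fix---which the paper uses---is to take $S=[y_1,y_2]$ to be a \emph{diameter} of $K$ (a longest segment overall, not merely in direction $e$). Then $|S|\ge |p-q|\ge 2-\gamma_1\varepsilon^2$, and maximality of $|y_1-y_2|$ forces the foot of any $y\in K$ on the line through $y_1,y_2$ to lie within $[y_1,y_2]$ (otherwise one of $|y-y_1|,|y-y_2|$ would exceed $|y_1-y_2|$), after which your Step~2 computation applies verbatim.
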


The inequality between the circumradius and  the first intrinsic volume (or the mean width) of a convex body, which is stated
in Theorem \ref{thmbkr0} (i), is due to J.~Linhart \cite{Linhart}.
Our proof for part (i) follows Linhart's idea, but we introduce several modifications so as to simplify the discussion of the equality case  and prepare for the proof of part (ii).  The proof of the assertion in part (ii) provides a substantial strengthening and refinement of Linhart's argument.

The order of the error bound  in Theorem~\ref{thmbkr0} (ii) is $\sqrt{\varepsilon}$. This is the optimal order, as can be seen by considering  isosceles triangles.

Geometric stability results have recently found applications in stochastic geometry, in particular in the study of
 shapes of large cells in  random tessellations \cite{HS2007}. We refer to the arxiv version of the present paper for an application of Theorem \ref{thmbkr0}  to a probabilistic deviation result
for  Poisson hyperplane tessellations in $\R^n$.

We note that Betke and Weil \cite{BW} also proved that
if $K \in\cK^2$, then
\begin{equation}
\label{Betke-Weil2}
  V(K, -K) \leq \frac {\sqrt 3}{18}\, F^2(K),
\end{equation}
and under the additional assumption that $K$ is a two-dimensional polygon they showed that
equality holds in \eqref{Betke-Weil2} if and only if
$K$ is an equilateral triangle.

Betke and Weil \cite{BW} suggested as a problem to characterize the equality cases of
\eqref{Betke-Weil2} among all planar compact convex sets $K\in\K^2$.
This goal is achieved in the forthcoming manuscript \cite{BBH}.

The paper is structured as follows. Some basic notions which are used in the following are introduced in Section~\ref{secpreliminaries}. Then Theorem~\ref{thmbkr0} is proved in Section~\ref{secRV1}.
Our main results, Theorem~\ref{mvolupper} and its stability version Theorem~\ref{mvolupperstab},
are established in Section~\ref{secmain}.

\section{Preliminaries}
\label{secpreliminaries}

For basic notions and results from the Brunn-Minkowski theory, we refer to the monograph  \cite{S14}. We work in Euclidean space $\R^n$, $n\ge 2$, with
scalar product $\langle\cdot\,,\cdot\rangle$ and induced Euclidean norm $\|\cdot\|$. The unit ball is denoted by $B^n$, its boundary is the unit sphere $\BS^{n-1}=\partial B^n$. For a set $A$ in a topological space we denote its closure by $\text{cl}
(A)$.
 If $u\in\BS^{n-1}$, then $u^\bot$ denotes the  $(n-1)$-dimensional linear subspace orthogonal to $u$, and we write $X|u^\bot$ for the orthogonal projection of $X\subset\R^n$ into $u^\bot$.
 The support function of a convex body $K \in\cK^n$   is
$h_K(x)=h(K,x)=\max\{\langle x,y\rangle:y\in K\}$ for $x\in\R^n$. The convex hull of $x_1,\ldots,x_k\in\R^n$ will be denoted
by $[x_1,\ldots,x_k]$, hence $h([x_1,\ldots,x_k],x)=\max\{\langle x_i,x\rangle:i=1,\ldots,k\}$. We write $\text{lin}\{x_1,\ldots,x_k\}$ for the linear hull of $x_1,\ldots,x_k\in\R^n$. For unit vectors $u,v\in\BS^{n-1}$ we denote by $\angle(u,v)\in [0,\pi]$ the angle enclosed by $u,v$ so that $\cos\angle(u,v)=\langle u,v\rangle$.

The surface area measure $S_{n-1}(K,\cdot)$ of $K\in\cK^n$ is the (unique) finite Borel measure
on $\BS^{n-1}$ such that, for all  $M \in\cK^n$,
$$
V(M,K[n-1])=\frac1n\int_{\BS^{n-1}}h_M(u)\, S_{n-1}(K,du).
$$
The map $K\mapsto S_{n-1}(K,\cdot)$ from $\cK^n$ to the space of finite Borel measures on $\BS^{n-1}$ is weakly continuous.
We note that $S_{n-1}(K,\BS^{n-1})=F(K)$ is the surface area of $K$, and if $K\in\cK^n$ and $e\in \BS^{n-1}$, then
\begin{equation}
\label{projection}
2\,\mathcal{H}^{n-1}(K\vert e^\bot)=\int_{\BS^{n-1}}|\langle e,u\rangle|\, S_{n-1}(K,du).
\end{equation}
We provide some additional information about the surface area measure of a convex body $K\in\cK^n$.
If ${\rm dim}\,K\leq n-2$, then $S_{n-1}(K,\cdot)\equiv 0$.
If ${\rm dim}\,K= n-1$ and the affine hull of $K$ is parallel to $u^\bot$, for   $u\in\BS^{n-1}$, then
$S_{n-1}(K,\cdot)$ is the even measure concentrated on $\{-u,u\}$ with
$S_{n-1}(K,\{\pm u\})=\cH^{n-1}(\{u\})$. Now suppose that
${\rm int}\,K\neq\emptyset$. Then for each $x\in\partial K$, there exists an
exterior unit normal $u\in\BS^{n-1}$ such that $h_K(u)=\langle x,u\rangle$. Moreover, for
$\cH^{n-1}$-almost all $x\in\partial K$ the exterior unit normal of $K$ at $x$ is uniquely determined. In this case,
$x$ is called a regular boundary point and the exterior unit normal of $K$ at $x$ is denoted by
$\nu_K(x)$. We write
$\partial' K$ to denote the set of regular boundary points of $K$.
Moreover, if $g:\,\BS^{n-1}\to\R$ is a bounded Borel function, then
$$
\int_{\BS^{n-1}}g(u)\, S_{n-1}(K,du)=
\int_{\partial' K}g(\nu_K(x))\,\cH^{n-1}(dx).
$$
Since $2V_{n-1}(K)=S_{n-1}(K,\BS^{n-1})$, we deduce from \eqref{projection}  that if $e\in \BS^{n-1}$, then
\begin{equation}
\label{projectionVn-1}
\mathcal{H}^{n-1}(K\vert e^\bot)\leq V_{n-1}(K),
\end{equation}
with equality for ${\rm dim}\,K\ge n-1$ if and only if
${\rm dim}\,K= n-1$ and
$e$ is normal to $K$.
In addition, for $K\in\cK^n$ and $e\in \BS^{n-1}$,  we have
$$
\cH^{n-1}(K\vert e^\bot)=\int_{K}|\langle e,u\rangle|\, \mathcal{H}^{n-1}(dx)
$$
if ${\rm dim}\,K= n-1$ and $u\in\BS^{n-1}$ is normal to $K$, and
$$
\cH^{n-1}(K\vert e^\bot)=\frac12\int_{\partial' K}|\langle e,\nu_{K}(x)\rangle|\, \mathcal{H}^{n-1}(dx)
$$
if ${\rm dim}\,K= n$.

\section{Proof of Theorem~\ref{thmbkr0}}
\label{secRV1}

For $z\in \BS^{n-1}$ and $\alpha\in(0,\pi)$, let $B(z,\alpha)=\{x\in \BS^{n-1}:\langle x,z\rangle\geq\cos\alpha\}$ be the spherical cap (geodesic ball) centered at $z$  of radius $\alpha$.
For a spherical set $\Pi\subset \BS^{n-1}$, we write ${\rm int}_{s}\Pi$ for the interior
and $\partial _{s}\Pi$ for the boundary of $\Pi$, with respect to $\BS^{n-1}$ and its topology induced by the geodesic metric. For a point $x\in \BS^{n-1}$, the point $-x$ is the antipode of $x$. We call $\Pi\subset \BS^{n-1}$  starshaped with respect to  $x_0\in \BS^{n-1}$ if $x_0\in \Pi$,
$-x_0\not\in \Pi$, and  for any $x\in \Pi\setminus \{ x_0\}$ the spherical geodesic arc connecting $x$ and $x_0$ is contained in $\Pi$.

If not specified otherwise, constants $c_1,c_2,\ldots $ used below will only depend on the dimension.

The following observation is a key step in proving Theorem~\ref{thmbkr0} (i).

\begin{lemma}
\label{monotonicity}
If $\alpha\in(0,\frac{\pi}2]$, $n\geq 2$, $z\in \BS^{n-1}$ and $\Pi\subset B(z,\alpha)$ is compact and starshaped with respect to $z$, then
\begin{equation}\label{keyin}
 \displaystyle{\int_{\Pi}\langle z,u\rangle\,\mathcal{H}^{n-1}(du)} \geq
\frac{\displaystyle{\int_{B(z,\alpha)}\langle z,u\rangle\,\mathcal{H}^{n-1}(du)}}{\mathcal{H}^{n-1}(B(z,\alpha))}\, {\mathcal{H}^{n-1}(\Pi)}.
\end{equation}
If $z\in {\rm int}_{s}\Pi$, then equality holds if and only if $\Pi=B(z,\alpha)$.
\end{lemma}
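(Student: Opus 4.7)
The plan is to reduce \eqref{keyin} to a one-variable inequality via spherical polar coordinates centered at $z$. Write a generic $u\in\BS^{n-1}$ as $u=\cos\theta\, z+\sin\theta\, v$ with $\theta\in[0,\pi]$ and $v\in\BS^{n-1}\cap z^\perp$. Since $\Pi$ is compact and starshaped with respect to $z$, for every $v$ the slice $\{\theta\in[0,\alpha]:\cos\theta\, z+\sin\theta\, v\in\Pi\}$ is a closed interval $[0,\rho(v)]$ with $\rho(v)\in[0,\alpha]$, and a standard limit argument using the closedness of $\Pi$ shows that $\rho$ is upper semicontinuous, hence Borel. Using $\mathcal{H}^{n-1}(du)=\sin^{n-2}\theta\, d\theta\,\mathcal{H}^{n-2}(dv)$ and setting
$$
a(\rho)=\int_0^\rho\cos\theta\,\sin^{n-2}\theta\,d\theta=\frac{\sin^{n-1}\rho}{n-1},\qquad b(\rho)=\int_0^\rho\sin^{n-2}\theta\,d\theta,
$$
Fubini yields $\int_\Pi\langle z,u\rangle\,\mathcal{H}^{n-1}(du)=\int_{\BS^{n-2}}a(\rho(v))\,\mathcal{H}^{n-2}(dv)$ and $\mathcal{H}^{n-1}(\Pi)=\int_{\BS^{n-2}}b(\rho(v))\,\mathcal{H}^{n-2}(dv)$, with the obvious analogues for $B(z,\alpha)$, in which $\rho\equiv\alpha$.

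After this reduction, \eqref{keyin} becomes
$$
\int_{\BS^{n-2}}\bigl[a(\rho(v))\,b(\alpha)-a(\alpha)\,b(\rho(v))\bigr]\mathcal{H}^{n-2}(dv)\;\ge\;0,
$$
and therefore follows at once from the pointwise estimate $a(\rho)\,b(\alpha)\ge a(\alpha)\,b(\rho)$ for $\rho\in[0,\alpha]$; equivalently, $\rho\mapsto a(\rho)/b(\rho)$ is non-increasing on $(0,\alpha]$. This is immediate from the monotone form of l'H\^opital's rule: $a(0)=b(0)=0$, $b'>0$ on $(0,\pi)$, and $a'(\theta)/b'(\theta)=\cos\theta$ is strictly decreasing on $(0,\pi)$, so $a/b$ is strictly decreasing there as well.

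For the equality characterization, assume $z\in{\rm int}_s\Pi$, so some spherical cap $B(z,\delta)$ lies in $\Pi$ and $\rho(v)\ge\delta>0$ for every $v\in\BS^{n-1}\cap z^\perp$. Equality in \eqref{keyin} forces the non-negative integrand $a(\rho(v))b(\alpha)-a(\alpha)b(\rho(v))$ to vanish $\mathcal{H}^{n-2}$-almost everywhere, and the strict monotonicity of $a/b$ then gives $\rho(v)=\alpha$ for $\mathcal{H}^{n-2}$-a.e.\ $v$. Upper semicontinuity of $\rho$ upgrades this to $\rho\equiv\alpha$: any $v_0$ with $\rho(v_0)<\alpha$ would have an open neighborhood on which $\rho<\alpha$, contradicting the a.e.\ identity. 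Hence $\Pi=B(z,\alpha)$, while the converse implication is trivial.

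The main point to check carefully is the polar decomposition itself, particularly the upper semicontinuity of $\rho$; once this is in place, the remaining ingredients are one-variable calculus. Notice also that the strict decrease of $a/b$ uses only that $\cos\theta$ is strictly decreasing on $(0,\pi)$, so the hypothesis $\alpha\le\pi/2$ plays no role in the proof of the lemma itself and presumably reflects only the range in which the lemma is later applied.
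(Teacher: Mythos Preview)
Your proof is correct and follows essentially the same approach as the paper: both use spherical polar coordinates centered at $z$, reduce to the monotonicity of the ratio $\rho\mapsto\int_0^\rho\cos s\,\sin^{n-2}s\,ds\big/\int_0^\rho\sin^{n-2}s\,ds$, and then handle the equality case by upgrading an almost-everywhere identity via the closedness of $\Pi$. The only cosmetic differences are that the paper establishes the ratio monotonicity by a direct weighted-average argument rather than invoking the monotone l'H\^opital rule, and appeals to compactness of $\Pi$ directly instead of naming the upper semicontinuity of the radial function.
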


\begin{proof} For the proof, we can assume that $\mathcal{H}^{n-1}(\Pi)>0$.
For $u\in z^\bot\cap \BS^{n-1}$,  the ``spherical radial function" of $\Pi$ evaluated at $u$  is
defined by
$$
\varphi(u)=\max\{t\in [0,\alpha]:\,z\, \cos t+u\, \sin t\,\in\Pi\}\in[0,\alpha].
$$
In addition, let
$$
\Xi_0=\{u\in z^\bot\cap \BS^{n-1}:\,\varphi(u)>0\}.
$$
We set $\varrho(s)=(\sin s)^{n-2}$ for  $s\in(0,\pi)$.
Then an application of the transformation formula shows that
\begin{align*}
\int_{\Pi}\langle z,u\rangle\,\mathcal{H}^{n-1}(du)&=\int_{\Xi_0}
\int_{0}^{\varphi(u)}(\cos s) \varrho(s)\,ds\,\mathcal{H}^{n-2}(du),\\
\mathcal{H}^{n-1}(\Pi)&=\int_{\Xi_0}
\int_{0}^{\varphi(u)}\varrho(s)\,ds\,\mathcal{H}^{n-2}(du).
\end{align*}
Since $\cos s$ is decreasing in $s$, for  $u\in \Xi_0$ with $\varphi(u)<\alpha$ we have
$$
\frac{\int_{\varphi(u)}^{\alpha}(\cos s) \varrho(s)\,ds}
{\int_{\varphi(u)}^{\alpha}\varrho(s)\,ds}
< \cos\varphi(u) <
\frac{\int_{0}^{\varphi(u)}(\cos s) \varrho(s)\,ds}
{\int_{0}^{\varphi(u)}\varrho(s)\,ds},
$$
which in turn yields that
\begin{align*}
&\frac{\int_{0}^{\alpha}(\cos s) \varrho(s)\,ds}
{\int_{0}^{\alpha}\varrho(s)\,ds}=
\frac{\int_{0}^{\varphi(u)}\varrho(s)\,ds}{\int_{0}^{\alpha}\varrho(s)\,ds}
\,
\frac{\int_{0}^{\varphi(u)}(\cos s) \varrho(s)\,ds}
{\int_{0}^{\varphi(u)}\varrho(s)\,ds}\\
&\qquad\qquad\qquad\qquad\qquad+
\frac{\int_{\varphi(u)}^{\alpha}\varrho(s)\,ds}{\int_{0}^{\alpha}\varrho(s)\,ds}
\,
\frac{\int_{\varphi(u)}^{\alpha}(\cos s) \varrho(s)\,ds}
{\int_{\varphi(u)}^{\alpha}\varrho(s)\,ds}
\leq \frac{\int_{0}^{\varphi(u)}(\cos s) \varrho(s)\,ds}
{\int_{0}^{\varphi(u)}\varrho(s)\,ds}.
\end{align*}
The inequality in fact holds for all $u\in \Xi_0$, and it is strict if $\varphi(u)<\alpha$.
Therefore
\begin{align*}
\int_{\Pi}\langle z,u\rangle\,\mathcal{H}^{n-1}(du)&\geq
\frac{\int_{0}^{\alpha}(\cos s) \varrho(s)\,ds}
{\int_{0}^{\alpha}\varrho(s)\,ds}
\int_{\Xi_0}
\int_{0}^{\varphi(u)}\varrho(s)\,ds\,\mathcal{H}^{n-2}(du)\\
&=
\frac{\int_{B(z,\alpha)}\langle z,u\rangle\,\mathcal{H}^{n-1}(du)}{\mathcal{H}^{n-1}(B(z,\alpha))}
\, \mathcal{H}^{n-1}(\Pi),
\end{align*}
which proves \eqref{keyin}. If  $z\in {\rm int}_{s}\Pi$, then $\Xi_0=z^\perp\cap \BS^{n-1}$. Hence, if \eqref{keyin} holds with equality,
then $\varphi(u)=\alpha$ for $\cH^{n-2}$-almost all $u\in z^\perp\cap \BS^{n-1}$. But since $\Pi\subset B(z,\alpha)$ is compact,
it follows that $\Pi= B(z,\alpha)$.
\end{proof}

We note that for any $z\in \BS^{n-1}$, we have
\begin{equation}
\label{intratio}
\frac{\int_{B(z,\frac{\pi}2)}\langle z,u\rangle\,\mathcal{H}^{n-1}(du)}
{\mathcal{H}^{n-1}(B(z,\frac{\pi}2))}=\frac{2\kappa_{n-1}}{\mathcal{H}^{n-1}(\BS^{n-1})}.
\end{equation}
The following lemma shows how the left side increases when $B(z,\frac{\pi}2)$ is replaced by $B(z,\alpha)$ and
$0<\alpha\leq\frac{\pi}2-\varepsilon$.

\begin{lemma}
\label{monotonicitystab}
If $0<\alpha\leq\frac{\pi}2-\varepsilon$, $\varepsilon\in[0,\frac{\pi}6]$, $n\geq 2$ and $z\in \BS^{n-1}$, then
$$
\frac{\displaystyle{\int_{B(z,\alpha)}\langle z,u\rangle\,\mathcal{H}^{n-1}(du)}}{\mathcal{H}^{n-1}(B(z,\alpha))}
\ge (1+c_1\,\varepsilon)\, \frac{2\kappa_{n-1}}{\mathcal{H}^{n-1}(\BS^{n-1})},
$$
where $c_1>0$ depends on $n$.
\end{lemma}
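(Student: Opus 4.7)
The plan is to pass to a one-variable quantity and then prove a quantitative monotonicity statement. Using the spherical parametrization of $B(z,\alpha)$ already employed in the proof of Lemma~\ref{monotonicity} (with $z$ as ``north pole'' and the Jacobian factor $\varrho(s)=\sin^{n-2}(s)$), the left-hand side of the asserted inequality equals
$$
f(\alpha) := \frac{\int_0^\alpha \cos(s)\,\sin^{n-2}(s)\,ds}{\int_0^\alpha \sin^{n-2}(s)\,ds},
$$
while by \eqref{intratio} the constant on the right-hand side is $f(\pi/2)$. A direct differentiation yields $f'(s)=\sin^{n-2}(s)(\cos(s)-f(s))/B(s)$, where $B(s):=\int_0^s\sin^{n-2}(t)\,dt$. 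Since $\cos$ is strictly decreasing on $(0,\pi/2]$, the weighted average $f(s)$ of $\cos$ over $[0,s]$ strictly exceeds $\cos(s)$, so $f'<0$. In particular $f$ is decreasing, and it suffices to prove $f(\pi/2-\varepsilon)\ge(1+c_1\varepsilon)f(\pi/2)$.

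The heart of the argument is a uniform lower bound $-f'(s)\ge c(n)>0$ on the interval $[\pi/3,\pi/2]$. Using the identity $\sin^{n-1}(s)=(n-1)\int_0^s\cos(t)\sin^{n-2}(t)\,dt$, one rewrites
$$
f(s)-\cos(s)=\frac{\int_0^s\sin^{n-2}(t)\bigl(\cos(t)-\cos(s)\bigr)\,dt}{B(s)}.
$$
For $s\in[\pi/3,\pi/2]$ and $t\in[0,\pi/6]$ one has $\cos(t)-\cos(s)\ge\cos(\pi/6)-\cos(\pi/3)=(\sqrt{3}-1)/2$, so restricting the numerator integral to $t\in[0,\pi/6]$ (which is possible because $s\ge\pi/3>\pi/6$) and using the elementary bounds $B(s)\le B(\pi/2)$ and $\sin^{n-2}(s)\ge(\sqrt{3}/2)^{n-2}$ on $[\pi/3,\pi/2]$ produces an explicit positive constant $c(n)$ with $-f'(s)\ge c(n)$ throughout this interval.

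Since $\varepsilon\le\pi/6$, the whole segment $[\pi/2-\varepsilon,\pi/2]$ is contained in $[\pi/3,\pi/2]$, so the mean value theorem gives
$$
f(\pi/2-\varepsilon)-f(\pi/2)=-\int_{\pi/2-\varepsilon}^{\pi/2}f'(s)\,ds\ge c(n)\,\varepsilon,
$$
and dividing by the positive constant $f(\pi/2)$ yields the claim with $c_1=c(n)/f(\pi/2)$. The main obstacle I anticipate is avoiding any cruder estimate for $f(s)-\cos(s)$ that degenerates when $f(\pi/2)$ happens to be close to $1/2$ (as in the case $n=3$, where $f(\pi/2)=1/2=\sin(\pi/6)$): the naive comparison $f(s)-\cos(s)\ge f(\pi/2)-\sin(\varepsilon)$ can vanish at the boundary $\varepsilon=\pi/6$. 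This is precisely why the numerator integral must be split so as to extract a positive contribution from values of $t$ bounded away from $s$, yielding a lower bound independent of the precise value of $f(\pi/2)$.
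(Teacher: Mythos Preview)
Your proof is correct and follows essentially the same approach as the paper's: both reduce to the one-variable function $f(\alpha)=\int_0^\alpha \cos(s)\varrho(s)\,ds\big/\int_0^\alpha\varrho(s)\,ds$, observe that $f$ is decreasing so it suffices to bound $-f'$ from below on $[\pi/3,\pi/2]$, and obtain that bound by splitting the numerator integral at $\pi/6$ to extract the gap $\cos(\pi/6)-\cos(\pi/3)$. Your write-up is slightly more explicit than the paper's about why the bound on $-f'$ over $[\pi/3,\pi/2]$ combined with $\varepsilon\le\pi/6$ yields the conclusion via the fundamental theorem of calculus; the paper leaves that step to the reader.
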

\begin{proof} For $\alpha\in(0,\frac{\pi}2]$, let
$$
f(\alpha)=\frac{\int_{B(z,\alpha)}\langle z,u\rangle\,\mathcal{H}^{n-1}(du)}{\mathcal{H}^{n-1}(B(z,\alpha))}
=\frac{\int_{0}^{\alpha}(\cos s) \varrho(s)\,ds}
{\int_{0}^{\alpha}\varrho(s)\,ds},
$$
and hence
$$
f'(\alpha)=\frac{\varrho(\alpha)}
{\left(\int_{0}^{\alpha}\varrho(s)\,ds\right)^2}
\left(\cos\alpha\, \int_{0}^{\alpha}\varrho(s)\,ds-  \int_{0}^{\alpha}(\cos s) \varrho(s)\,ds\right)
<0
$$
as $\cos s>\cos\alpha$ for $0<s<\alpha$.

Since $f$ is monotone decreasing on $(0,\frac{\pi}2]$, it is sufficient to prove that
$f'(\alpha)\le -c_2$ for $\alpha\in [\frac{\pi}{3},\frac{\pi}{2}]$,
where $c_2>0$ is a constant  depending on $n$.
We observe that
$$
\int_{0}^{\alpha}(\cos s) \varrho (s)\,ds\geq
\cos\alpha\, \int_{\frac{\pi}6}^{\alpha}\varrho(s)\,ds +
\cos\frac{\pi}6\, \int_{0}^{\frac{\pi}6}\varrho(s)\,ds,
$$
and therefore
$$
\cos\alpha\, \int_{0}^{\alpha}\varrho(s) \,ds-  \int_{0}^{\alpha}(\cos s) \varrho(s)\,ds\leq
\left(\cos\alpha -\cos\frac{\pi}6\right)\int_{0}^{\frac{\pi}6}\varrho(s)\,ds<0.
$$
Since $\alpha\ge \frac{\pi}{3}>\frac{\pi}{6}$, we conclude that
$$
f'(\alpha)\leq \frac{(\sin \frac{\pi}3)^{n-2}}
{\left(\int_{0}^{\frac{\pi}2}\varrho(s)\,ds\right)^2}\,
\left(\cos\frac{\pi}3 -\cos\frac{\pi}6\right)\,\int_{0}^{\frac{\pi}6}\varrho(s)\,ds,
$$
which proves Lemma~\ref{monotonicitystab}.
\end{proof}

We slightly rephrase Theorem \ref{thmbkr0} from the introduction as follows.

\begin{theorem}\label{thmbkr}
Let $K\in\cK^n$.
\begin{enumerate}
\item[{\rm (i)}] Then $V_1(K)\ge 2 R(K)$,
with equality if and only if $K$ is a segment
\item[{\rm (ii)}] If  $V_1(K)\le (2+\varepsilon) R(K)$ for some   $\varepsilon\in[0,\varepsilon_0]$, then there exist  $z\in\R^n$ and a segment  $S$ of length
$2-\gamma_1\varepsilon^2$ such that $R(K) S\subset K-z\subset R(K)(S+\gamma_2\sqrt{\varepsilon}B^{n-1}_S)$, where $\varepsilon_0,\gamma_1,\gamma_2>0$ are constants depending on $n$.
\end{enumerate}
\end{theorem}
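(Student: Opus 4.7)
My plan is to adapt Linhart's argument, which yields (i), and push it through quantitatively for (ii) using Lemma~\ref{monotonicity} and its stability refinement Lemma~\ref{monotonicitystab}. Normalize $R(K) = 1$ with circumcenter at the origin. By Carath\'eodory there exist contact points $u_1, \ldots, u_k \in K \cap \BS^{n-1}$, $k \le n+1$, with $0 \in \conv\{u_1, \ldots, u_k\}$. Define the spherical Voronoi cells $\Pi_i = \{u \in \BS^{n-1} : \langle u, u_i\rangle \ge \langle u, u_j\rangle \text{ for all } j\}$. These partition $\BS^{n-1}$ up to a null set, are compact and starshaped at $u_i$, and lie in $B(u_i, \pi/2)$ (since $0 \in \conv\{u_j\}$ forces $\max_j \langle u, u_j\rangle \ge 0$ for every $u \in \BS^{n-1}$). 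Since each $u_i \in K$, we have $h_K(u) \ge \max_j \langle u, u_j\rangle$; writing $V_1(K) = \kappa_{n-1}^{-1}\int_{\BS^{n-1}} h_K\, d\cH^{n-1}$, applying Lemma~\ref{monotonicity} to each $\Pi_i$ with $\alpha = \pi/2$, and using \eqref{intratio} together with $\sum_i \cH^{n-1}(\Pi_i) = n\kappa_n$, yields $V_1(K) \ge 2$, proving (i). For equality, the equality case of Lemma~\ref{monotonicity} forces $\Pi_i = B(u_i, \pi/2)$ for each $i$ with $u_i \in {\rm int}_s\Pi_i$; such a hemispheric partition is only possible when $k = 2$ with $u_2 = -u_1$, and the equality $h_K = \max_j \langle u_j, \cdot\rangle$ then gives $K = [u_1, -u_1]$.

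For the stability statement (ii), split $V_1(K) - 2 = G_1 + G_2 \le \varepsilon$ into the nonnegative pieces
$$
G_1 = \frac{1}{\kappa_{n-1}}\int_{\BS^{n-1}}\bigl(h_K(u) - \max_j\langle u_j, u\rangle\bigr)\, \cH^{n-1}(du),
$$
$$
G_2 = \frac{1}{\kappa_{n-1}}\sum_i\left[\int_{\Pi_i}\langle u_i, u\rangle\, \cH^{n-1}(du) - \frac{2\kappa_{n-1}}{n\kappa_n}\cH^{n-1}(\Pi_i)\right],
$$
each at most $\varepsilon$. From the proof of Lemma~\ref{monotonicity}, the $i$-th summand of $G_2$ admits a lower bound in terms of $\beta_i = \sup\{\angle(u_i, u):u\in\Pi_i\}$; Lemma~\ref{monotonicitystab} converts $\pi/2 - \beta_i \ge \delta_i$ into a contribution of order $\delta_i\, \cH^{n-1}(\Pi_i)$. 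A pigeonhole/budget argument (working cell by cell) then shows that $G_2 \le \varepsilon$ forces exactly two cells, say $\Pi_1$ and $\Pi_2$, to have area at least $\tfrac{1}{2}n\kappa_n - O(\varepsilon)$; since the $\Pi_i$ are disjoint and contained in the hemispheres at $u_1$ and $u_2$, the hemispheric overlap $\cH^{n-1}(B(u_1,\pi/2)\cap B(u_2,\pi/2))$ must be $O(\varepsilon)$, which in turn forces $\angle(u_1, -u_2) \le c\varepsilon$. The chord $[u_1, u_2] \subset K$ then has length $\|u_1-u_2\| = 2\cos(\angle(u_1,-u_2)/2) \ge 2 - c'\varepsilon^2$; this is the required segment $S$ parallel to $e := (u_1 - u_2)/\|u_1 - u_2\|$, after a suitable translation $z$.

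For the transverse bound $K \subset S + \gamma_2 \sqrt\varepsilon B^{n-1}_S$, I use $G_1 \le \varepsilon$. The structural conclusion about the $u_j$'s gives $\int_{\BS^{n-1}}|\max_j\langle u_j, u\rangle - |\langle u, e\rangle||\, \cH^{n-1}(du) = O(\varepsilon)$ (on the dominant cells $\Pi_1,\Pi_2$ the difference is $O(\angle(u_1,-u_2))=O(\varepsilon)$, while the remaining cells carry only $O(\varepsilon)$ mass), so $\int_{\BS^{n-1}}(h_K(u) - |\langle u, e\rangle|)\, \cH^{n-1}(du) = O(\varepsilon)$. If some $p \in K$ lies at perpendicular distance $\eta > 0$ from the line through $S$, writing $p = p_\parallel + \eta v$ with $v \in e^\perp\cap\BS^{n-1}$ and comparing $h_K$ with the support function of $\conv(S\cup\{p\})$, a direct calculation shows that $h_K(u) - |\langle u, e\rangle| \gtrsim \eta$ on a subset of $\BS^{n-1}$ of $\cH^{n-1}$-measure $\gtrsim \eta$, contributing $\gtrsim \eta^2$ to the integral. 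Hence $\eta = O(\sqrt\varepsilon)$, yielding the stated inclusion.

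The main obstacle is the budget argument producing the \emph{linear} bound $\angle(u_1, -u_2) \le c\varepsilon$ from $G_2 \le \varepsilon$: one must apply Lemma~\ref{monotonicitystab} cell by cell to rule out simultaneously ``many small cells'' and ``two non-antipodal nearly-hemispheric cells'', and then translate the resulting hemispheric partition back into quantitative near-antipodality of $u_1, u_2$. This linear angular bound is what produces the $\varepsilon^2$ order of the length deficit for $S$; the transverse $\sqrt\varepsilon$ bound, which is sharp by the isosceles triangle example mentioned in the introduction, then follows from the $\eta^2$-type convexity computation on the sphere.
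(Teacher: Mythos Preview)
Your argument for part~(i), including the equality case, is correct and essentially coincides with the paper's proof.

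For part~(ii), however, the ``budget argument'' has a genuine gap. You claim that $G_2\le\varepsilon$ forces \emph{two} Voronoi cells $\Pi_1,\Pi_2$ to have $\cH^{n-1}$-measure at least $\tfrac12 n\kappa_n - O(\varepsilon)$, and then deduce near-antipodality of $u_1, u_2$ from the resulting hemispheric overlap bound. But this fails already for the isosceles triangle you invoke at the end: take $n=2$, $u_1=e_1$ and $u_{2},u_{3}=(-\cos\theta,\pm\sin\theta)$. One checks directly that $\Pi_1$ is an arc of length $\pi-\theta$ (nearly a hemisphere), while $\Pi_2,\Pi_3$ are arcs of length $\pi/2+\theta/2$ each---bounded away from $\pi$ by a constant. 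Here $G_2=V_1([u_1,u_2,u_3])-2\approx\theta$, so $\varepsilon\approx\theta$, yet only \emph{one} cell is nearly hemispheric. The overlap argument therefore does not apply, and the subsequent claim that ``the remaining cells carry only $O(\varepsilon)$ mass'' is equally false. Your estimate $G_2\gtrsim\sum_i(\pi/2-\beta_i)\,\cH^{n-1}(\Pi_i)$ from Lemma~\ref{monotonicitystab} is valid, but it only bounds a weighted sum of the $\delta_i=\pi/2-\beta_i$; it does not by itself force any pair $u_i,u_j$ to be $O(\varepsilon)$-antipodal, nor does it constrain individual cell measures.

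The paper establishes (ii) by a different route. It proves the contrapositive implication: if $\diam K\le 2-\eta$, then $V_1(K)\ge 2+c_3\sqrt{\eta}$. For this, one takes the largest cell $\Delta_1$ and decomposes it into geodesic sectors $\Gamma_j$ over its boundary faces $\digamma_j=\Delta_1\cap\Delta_j$; some sector, say $\Gamma_2$, satisfies $\cH^{n-1}(\Gamma_2)\ge\kappa_n/(n+1)$. Since $\digamma_2\subset(x_1-x_2)^\perp$ and $\|x_1-x_2\|\le 2-\eta$, a careful geometric argument (after removing a small auxiliary set $\Xi$) shows $\Gamma_2\setminus\Xi\subset B(x_1,\alpha)$ with $\pi/2-\alpha\gtrsim\sqrt{\eta}$; Lemma~\ref{monotonicitystab} applied to this portion then produces the $c_3\sqrt{\eta}$ gain. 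By contraposition, $V_1(K)\le 2+\varepsilon$ yields $\diam K\ge 2-(\varepsilon/c_3)^2$, so a longest segment $[y_1,y_2]\subset K$ (which need not connect contact points) has the required length. The transverse $\sqrt{\varepsilon}$ bound follows from the elementary perimeter estimate $2V_1([y_1,y_2,y])\ge \|y_1-y_2\|+\sqrt{\|y_1-y_2\|^2+4t^2}$ for any $y\in K$ at distance $t$ from $[y_1,y_2]$.
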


Note  that Theorem \ref{thmbkr0} remains true if $R(K)=0$ (as stated above). In this case $K$ is a point and all assertions hold  trivially. If $R(K)>0$, then the explicit use  of the vector $c$ can be avoided by considering a translate of the segment $S$.

\begin{proof}[Proof of Theorem \ref{thmbkr}] For the proof, we can assume that $R(K)>0$.
By homogeneity and translation invariance, we can then assume that  $B^n$ is the circumball of $K$, and hence $R(K)=1$.
It  follows that  the origin $o$ is contained in the convex hull of $\BS^{n-1}\cap K$.
Let $k$ be the minimal number of points of $\BS^{n-1}\cap K$ whose convex hull
contains $o$. Then $2\leq k\leq n+1$ by Carath\'eodory's theorem. Now let $x_1,\ldots,x_k\in \BS^{n-1}\cap K$ and $\lambda_1,\ldots,\lambda_k> 0$ be such that $\lambda_1+\cdots+\lambda_k=1$ and $\lambda_1x_1+\cdots+\lambda_kx_k=o$.
 For $i=1,\ldots,k$, we define the Dirichlet-Voronoi cell
$$
\Delta_i=\{x\in \BS^{n-1}:\langle x,x_i\rangle\geq \langle x,x_j\rangle \text{ for } j=1,\ldots,k\}.
$$
Then $\Delta_i$ is spherically convex,  starshaped with respect to $x_i$ and $\Delta_1,\ldots,\Delta_k$ partition
$\BS^{n-1}$, in particular, we have
 $\sum_{i=1}^k \mathcal{H}^{n-1}(\Delta_i)=\mathcal{H}^{n-1}(\BS^{n-1})$.
In fact, since $\lambda_1x_1+\cdots+\lambda_kx_k=o$ implies that for any $x\in \BS^{n-1}$, there is some $i\in\{1,\ldots,k\}$ such that
$\langle x,x_i\rangle \geq 0$, it follows that $\Delta_i\subset B\left(x_i,\frac{\pi}2\right)$ for $i=1,\ldots,k$.

For $x\in \Delta_i$, we have $h([x_1,\ldots,x_k],x)=\langle x,x_i\rangle$.
Hence, the monotonicity of intrinsic volumes, Lemma~\ref{monotonicity} and (\ref{intratio}) imply that
\begin{align}
V_1(K)&\geq V_1([x_1,\ldots,x_k])= \frac{n}{\kappa_{n-1}} V([x_1,\ldots,x_k],B^n[n-1])\label{bound01}\\
&=
\frac1{\kappa_{n-1}}\sum_{i=1}^k\int_{\Delta_i}\langle x,x_i\rangle\, \cH^{n-1}(dx)\nonumber\\
&\geq  \frac1{\kappa_{n-1}}\sum_{i=1}^k
\frac{2\kappa_{n-1}}{\mathcal{H}^{n-1}(\BS^{n-1})}\, \mathcal{H}^{n-1}(\Delta_i)=2.\label{bound02}
\end{align}
Now suppose that $V_1(K)=2$. Then equality holds in \eqref{bound01}, which yields that $K=[x_1,\ldots,x_k]$.
Moreover, Lemma \ref{monotonicitystab}
shows that strict inequality holds in \eqref{bound02} if $\Delta_i\neq B(x_i,\pi/2)$ for some $i\in\{1,\ldots,k\}$. Hence, $V_1(K)=2$ implies that $k=2$ and $K=[x_1,x_2]$.

\medskip

To prove the stability statement (ii), we may again assume that $R(K)=1$.

The main part of the proof consists in showing that if $\diam K\leq 2-\eta$ for some $\eta\in [0,1]$, then
\begin{equation}
\label{longsegment}
V_1(K)\ge 2+c_3\sqrt{\eta} \mbox{ \ for a constant $c_3>0$ depending on $n$.}
\end{equation}
For $\eta=0$, the assertion holds by (i). Henceforth, we assume that $\eta>0$, and therefore we also have $k>2$.
Since $\sum_{i=1}^k \mathcal{H}^{n-1}(\Delta_i)=n\kappa_n$, we may assume that
$\mathcal{H}^{n-1}(\Delta_1)\geq n\kappa_n/k$. For $i=2,\ldots,k$,
let $\digamma_i=\{x\in \Delta_1:\,\langle x,x_1\rangle=\langle x,x_i\rangle\}$, which is contained in the
hyper-sphere $\{x\in \BS^{n-1}:\langle x,x_1-x_i\rangle=0\}$ and compact. We may assume that $\digamma_i\neq \emptyset$ precisely for $i\in\{2,\ldots,\ell\}$, hence $2\le \ell\le k$ and $\dim\digamma_i=n-2$ for $i=2,\ldots,\ell$.  Let $\Gamma_i$ be  the union of all spherical geodesic arcs connecting $x_1$ to the points of $\digamma_i$. Then $\Gamma_i$ is compact and starshaped with respect $x_1$ and $\sum_{i=2}^\ell \mathcal{H}^{n-1}(\Gamma_i)=\mathcal{H}^{n-1}(\Delta_1)$.
Since $2\le \ell\le k\le n+1$, we may assume that
$$
\mathcal{H}^{n-1}(\Gamma_2)\geq \frac{n\kappa_n}{k(\ell-1)}\geq
\frac{\kappa_n}{n+1}.
$$
Note that $x_2\neq -x_1$ since $\diam K<2$. Let $u\in x_1^\bot\cap \BS^{n-1}\cap {\rm lin}\{x_1,x_2\}$
be the vector such that $\langle u,x_2\rangle>0$.
Writing $\beta=\angle(x_1,x_2)$ for the angle enclosed by $x_1$ and $x_2$, we have $x_2=x_1\cos\beta+u\sin\beta$.
We deduce from $\digamma_2\subset B(x_1,\frac{\pi}2)\cap (x_1-x_2)^\bot$ that
$\digamma_2\subset B(x_1,\frac{\pi}2)\cap B(u,\frac{\pi}2)$, and thus
$$
\Gamma_2\subset \mbox{$B(x_1,\frac{\pi}2)\cap B(u,\frac{\pi}2)\cap B(\frac{x_1-x_2}{\|x_1-x_2\|},\frac{\pi}2)$}.
$$
For $n\ge 3$ we define
$$
\Xi_1=\{x\in x_1^\bot\cap \BS^{n-1} : 0\leq \langle x,u\rangle\leq \tau\},
$$
where $\tau=\tau(n)\in (0,1]$, depending only on $n$,  is chosen such that
$$
\cH^{n-2}(\Xi_1)=\frac{\cH^{n-2}(\BS^{n-2})}{n(n+1)}.
$$
Then we connect each point of $\Xi_1$ to $x_1$ by a geodesic arc and take the union of all such arcs to get a compact subset $\Xi\subset B(x_1,\frac{\pi}{2})$ which is starshaped with respect to $x_1$ and satisfies
$$
\cH^{n-1}(\Xi)=\frac{\cH^{n-1}(\BS^{n-1})}{2n(n+1)}=\frac{\kappa_n}{2(n+1)}.
$$
For $n=2$, we define $\Xi$ as the empty set and set $\tau=1$.
\medskip

\noindent
{\bf Claim.} If $0\le \eta\le 1$, then
\begin{equation}
\label{xx1}
\langle x,x_1\rangle \geq 2^{-1}\tau\sqrt{\eta} \quad \text{for } x\in \Gamma_2\setminus\Xi.
\end{equation}
For the proof, we write $x$ in the form
$x=x_1\cos s +x_0\sin s$, where  $x_0\in x_1^\perp$ and $s\in [0,\frac{\pi}{2}]$.
Since $x\in \Gamma_2\setminus\Xi$, we
conclude further that $s\in (0,\frac{\pi}{2}]$  and  $\langle x_0,u\rangle\ge\tau>0$, if $n\ge 3$, and $x_0=u$
if $n=2$.

We first observe that
\begin{equation}\label{new0}
\frac{x_1-x_2}{\|x_1-x_2\|}=x_1\sin\frac{\beta}2-u\cos\frac{\beta}2
\end{equation}
and
$$
\sin\frac{\beta}2=\frac12\|x_1-x_2\|\leq 1-\frac{\eta}2,
$$
since $x_1,x_2\in K$ and $\diam K\leq 2-\eta$. In addition, we have
$$
\left(\tan \frac{\beta}2\right)^2\leq \frac{(1-\frac{\eta}2)^2}{1-(1-\frac{\eta}2)^2}
\leq \frac{4}{3\eta}.
$$
Since $x\in \Gamma_2$, we have $x\in B(\frac{x_1-x_2}{\|x_1-x_2\|},\frac{\pi}2)$, and hence
by \eqref{new0} it follows that
$$
\langle x,u\rangle\leq \langle x,x_1\rangle\tan \frac{\beta}2\le \langle x,x_1\rangle\frac{2}{\sqrt{3}}\frac{1}{\sqrt{\eta}}.
$$
If $s\in[\frac{\pi}{3},\frac{\pi}{2}]$, then
$$
\langle x,u\rangle=\langle x_0,u\rangle \sin s\ge\tau\sin s\ge \frac{\sqrt{3}}{2}\tau,
$$
and hence
$$
\langle x,x_1\rangle\ge \frac{3}{4}\tau\sqrt{\eta}\ge \frac{1}{2}\tau\sqrt{\eta}.
$$
If $s\in(0,\frac{\pi}{3}]$, then again
$$
\langle x,x_1\rangle =\cos s\ge\frac{1}{2}\ge \frac{1}{2}\tau\sqrt{\eta},
$$
since $0<\eta,\tau\le 1$. This proves the claim.
\bigskip

It follows from the construction of $\Gamma_2$ and $\Xi$ that
\begin{equation}\label{newereq}
\mathcal{H}^{n-1}(\text{cl}(\Gamma_2\setminus\Xi))=\mathcal{H}^{n-1}(\Gamma_2\setminus\Xi)\geq \frac{\kappa_n}{2(n+1)}=
\frac{1}{2n(n+1)}\, \mathcal{H}^{n-1}(\BS^{n-1}).
\end{equation}
We define $\alpha\in (0,\frac{\pi}{2})$ by $\cos\alpha =2^{-1}\tau\sqrt{\eta}\in (0,\frac{1}{2}]$, and
hence $\alpha\ge \frac{\pi}{3}$.
Then \eqref{xx1} implies that $\text{cl}(\Gamma_2\setminus\Xi)\subset B(x_1,\alpha)$ and
$$
2^{-1}\tau\sqrt{\eta}=\cos\alpha=\sin\left(\frac{\pi}{2}-\alpha\right)\le \frac{\pi}{2}-\alpha,
$$
so that $0<\alpha\le \frac{\pi}{2}-2^{-1}\tau\sqrt{\eta}$ and $2^{-1}\tau\sqrt{\eta}\le \frac{1}{2}\le \frac{\pi}{6}$.
Therefore, we can apply Lemma~\ref{monotonicity} to the topological closure of $\Gamma_2\setminus\Xi$, which is starshaped with respect to $x_1$, and also use Lemma~\ref{monotonicitystab} and  \eqref{newereq} to get
\begin{align*}
\int_{\Gamma_2\setminus \Xi}\langle x_1,x\rangle\,\mathcal{H}^{n-1}(dx)&\geq
\frac{\int_{B(x_1,\alpha)}\langle x_1,x\rangle\,\mathcal{H}^{n-1}(dx)}{\mathcal{H}^{n-1}(B(x_1,\alpha))}
\, \mathcal{H}^{n-1}(\Gamma_2\setminus \Xi)\\
&\ge (1+c_1 2^{-1}\tau\sqrt{\eta})\, \frac{2\kappa_{n-1}}{\mathcal{H}^{n-1}(\BS^{n-1})}
\, \mathcal{H}^{n-1}(\Gamma_2\setminus \Xi)\\
&\geq \frac{2\kappa_{n-1}}{\mathcal{H}^{n-1}(\BS^{n-1})}\, \mathcal{H}^{n-1}(\Gamma_2\setminus \Xi)
+\frac{\kappa_{n-1}c_12^{-1}\tau}{n(n+1)}\, \sqrt{\eta}.
\end{align*}
In addition, using again Lemma~\ref{monotonicity} we also have
\begin{align*}
\int_{\Gamma_2\cap \Xi}\langle x_1,x\rangle\,\mathcal{H}^{n-1}(dx)&\geq
\frac{2\kappa_{n-1}}{\mathcal{H}^{n-1}(\BS^{n-1})}\, \mathcal{H}^{n-1}(\Gamma_2\cap \Xi),\\
\int_{\Gamma_j}\langle x_1,x\rangle\,\mathcal{H}^{n-1}(dx)&\geq
 \frac{2\kappa_{n-1}}{\mathcal{H}^{n-1}(\BS^{n-1})}\, \mathcal{H}^{n-1}(\Gamma_j)\quad \mbox{ \ for $j\geq 3$},\\
\int_{\Delta_i}\langle x_i,x\rangle\,\mathcal{H}^{n-1}(dx)&\geq
 \frac{2\kappa_{n-1}}{\mathcal{H}^{n-1}(\BS^{n-1})}\, \mathcal{H}^{n-1}(\Delta_i)\quad \mbox{ \ for $i\geq 2$}.
\end{align*}
Summing up the individual contributions from the subsets, we get
\begin{align*}
&V_1(K)\geq V_1([x_1,\ldots,x_k])=
\frac1{\kappa_{n-1}}\sum_{i=1}^k\int_{\Delta_i}\langle x,x_i\rangle\, \cH^{n-1}(dx)\\
&\geq \frac{\kappa_{n-1}c_12^{-1}\tau}{n(n+1)}\, \sqrt{\eta}+\frac1{\kappa_{n-1}}\sum_{i=1}^k
\frac{2\kappa_{n-1}}{\mathcal{H}^{n-1}(\BS^{n-1})}\, \mathcal{H}^{n-1}(\Delta_i)
=  2+\frac{\kappa_{n-1}c_12^{-1}\tau}{n(n+1)}\, \sqrt{\eta},
\end{align*}
which completes the proof of (\ref{longsegment}).

\medskip

 Let us assume that $V_1(K)\leq 2+\varepsilon$ for some $\epsilon\in [0, \min\{c_3,1/2\}]$.
If $[y_1,y_2]\subset K$ is a longest segment in $K$, then  $\|y_1-y_2\|\geq 2-(\varepsilon/c_3)^2$
 by (\ref{longsegment}). For any $y\in K$, let $t$ be the distance of $y$ from $[y_1,y_2]$.
Since $2V_1([y_1,y_2,y])$ is the perimeter of $[y_1,y_2,y]$,
we get
\begin{align*}
2+\varepsilon\geq  V_1(K)&\geq V_1([y_1,y_2,y])\geq \frac{\|y_1-y_2\|}2+\sqrt{\frac{\|y_1-y_2\|^2}4+t^2}\\
&\geq 1-\frac{(\varepsilon/c_3)^2}2+\sqrt{\left(1-\frac{(\varepsilon/c_3)^2}{2}\right)^2+t^2}.
\end{align*}
Using that $( {\varepsilon}/({2c_3^{2}})+1)\varepsilon\leq 1$ and $(1+s)^2\leq 1+3s$ for
$s\in[0,1]$, we obtain that
$$
t^2\leq \left({5}/({2c_3^{2}})+3\right) \varepsilon,
$$
which in turn yields that $K\subset [y_1,y_2]+\sqrt{(3+3c_3^{-2})\varepsilon}\, (B^n\cap (y_2-y_1)^\perp)$.
\end{proof}

\section{Proofs of Theorem~\ref{mvolupper} and Theorem~\ref{mvolupperstab}}
\label{secmain}

Part (i) of Theorem \ref{thmbkr} is the main ingredient for the proof of Theorem~\ref{mvolupper}.

\begin{proof}[Proof of Theorem~\ref{mvolupper}]
We can assume that the circumball of $K$ has its centre at the origin. Then
\begin{align}
V(K,M[n-1])&=\frac{1}{n}\int_{\BS^{n-1}}h_K(u)\, S_{n-1}(M,du)\le \frac{1}{n}R(K)F(M)\label{astplus}\\
&\le\frac{1}{n}\frac{1}{2} V_1(K) 2V_{n-1}(M)=\frac{1}{n}V_1(K)V_{n-1}(M),\nonumber
\end{align}
where we used Theorem \ref{thmbkr} for the second inequality. If equality holds, then equality holds in
Theorem \ref{thmbkr}, since $V_{n-1}(M)>0$, and therefore $K=[-R e,Re]$ with $R=R(K)$ and for some $e\in\BS^{n-1}$. Moreover,
we then also have equality in the first inequality, which yields
$$
\int_{\BS^{n-1}}|\langle u,e\rangle|\, S_{n-1}(M,du)=S_{n-1}(M,\BS^{n-1}).
$$
This implies that the area measure of $M$ is concentrated in $\{-e,e\}$, hence $M$ is contained in
a hyperplane orthogonal to $e$.
\end{proof}

We now start to build the argument leading to  Theorem~\ref{mvolupperstab}, which is the stability version
of Theorem~\ref{mvolupper}.
Recall from the discussion in Section \ref{secpreliminaries}  that if $M\in\cK^n$ is at least $(n-1)$-dimensional  and $e\in \BS^{n-1}$, then
\begin{equation}\label{closer}
\int_{\BS^{n-1}}|\langle e,u\rangle|\, S_{n-1}(M,du)\le 2V_{n-1}(M),
\end{equation}
with equality if and only if $M\subset e^\perp$.

In the following  proposition,
we explore what can be said about $M$ if the integral on the left side of \eqref{closer} is $\varepsilon$-close to the upper bound.

\begin{proposition}
\label{surfacearea-stability}
Let $\varepsilon\in(0,\frac{1}{2}\left(\frac{1}{2n}\right)^n)$ and $e\in \BS^{n-1}$. Suppose that $M\in\cK^n$ is  at least $(n-1)$-dimensional and
\begin{equation}\label{astdplus}
\int_{\BS^{n-1}}|\langle e,u\rangle|\, S_{n-1}(M,du)\geq (1-\varepsilon) 2V_{n-1}(M).
\end{equation}
Then there is a unit vector $v\in \BS^{n-1}$ such that $ w_M(v)\leq c_4 r\sqrt{\varepsilon}$ and
$\langle e,v\rangle\geq 1-c_5\varepsilon$,
where   $c_4\le 48 n^2\sqrt{6}^{\,n}$,
$c_5\le (10 n)^4(2n)^n$ are positive constants and
 $r$ is the maximal radius of an $(n-1)$-ball in $M\vert e^\bot$.
\end{proposition}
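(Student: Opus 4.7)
The plan is to reformulate the hypothesis as a deficit on the surface area measure of $M$, to identify a candidate unit vector $v$ from its first moment, and then to bound $w_M(v)$ via a concavity argument on the boundary graph of $M$ over $N := M\vert e^\perp$.

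First, I would use $F(M) = 2V_{n-1}(M) = S_{n-1}(M, \BS^{n-1})$ together with $\int_{\BS^{n-1}} |\langle u, e \rangle|\, S_{n-1}(M, du) = 2\cH^{n-1}(N)$ to rewrite the hypothesis as the deficit bound
$$\int_{\BS^{n-1}} (1 - |\langle u, e \rangle|)\, S_{n-1}(M, du) \leq \varepsilon F(M),$$
expressing that $S_{n-1}(M,\cdot)$ concentrates near $\{\pm e\}$. To locate $v$, I set $\vec a := \int_{\{\langle u, e\rangle > 0\}} u \, S_{n-1}(M, du) \in \R^n$. The Minkowski centroid identity $\int u\, S_{n-1}(M, du) = 0$ yields $\vec a \cdot e = \cH^{n-1}(N) \geq (1-\varepsilon) V_{n-1}(M)$, while testing $\vec a$ against any unit $w$ and using that the equatorial surface measure is itself bounded by the deficit gives $|\vec a| \leq (1+O(\varepsilon)) V_{n-1}(M)$. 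Therefore $v := \vec a/|\vec a|$ satisfies $\langle v, e\rangle \geq 1 - O(\varepsilon)$, matching the angle condition (the large constant $c_5$ leaves room for a small refinement of $v$ tuned to the width bound).

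To bound $w_M(v)$, I would parametrize $\partial M$ over $N$ by a concave upper graph $f$ and a convex lower graph $g$, setting $h := f - g \geq 0$ (also concave on $N$). The area formula translates the deficit bound into
$$\int_N \left(\sqrt{1 + |\nabla f|^2} - 1\right) dy + \int_N \left(\sqrt{1 + |\nabla g|^2} - 1\right) dy + F_{\text{side}}(M) \leq \varepsilon F(M),$$
where $F_{\text{side}}(M) = S_{n-1}(M, e^\perp \cap \BS^{n-1})$ is the $\cH^{n-1}$-measure of the portion of $\partial M$ with outer normal in $e^\perp$. Using $\sqrt{1+t^2} - 1 \geq \min(t^2, t)/4$ (splitting the integration at $|\nabla f| = \sqrt{3}$) together with Cauchy--Schwarz on the small-gradient region, one extracts the integral gradient bounds
$$\int_N |\nabla f|\, dy,\ \int_N |\nabla g|\, dy \leq C_n \sqrt{\varepsilon}\, \cH^{n-1}(N).$$

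Finally, on the inball $B_{n-1}(y_0, r) \subset N$, I would combine the concavity of $f$ (and convexity of $g$) with the above gradient bounds to show that, after subtracting the best affine fit, both $f$ and $g$ have oscillation $O(r\sqrt{\varepsilon})$ on $B_{n-1}(y_0, r)$. The slopes of these affine fits encode the tilt determined by $v$, so in coordinates aligned with $v$, the cylinder of $M$ over $B_{n-1}(y_0, r)$ lies in a slab of width $O(r\sqrt{\varepsilon})$ orthogonal to $v$; convexity of $M$ then extends the slab containment to all of $M$, giving $w_M(v) \leq c_4 r\sqrt{\varepsilon}$. The main obstacle is precisely this last passage: since $\cH^{n-1}(N)$ can be arbitrarily larger than $\cH^{n-1}(B_{n-1}(y_0, r))$ for elongated $N$, a naive Poincar\'e argument is inadequate, and the concavity of $f$ must be invoked to rule out anomalous concentration of $|\nabla f|$ outside the inball. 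The $\sqrt{\varepsilon}$ rate arises from the Cauchy--Schwarz step and is sharp, as saturated up to constants by lens-shaped bodies; the dimension-dependent constants $c_4$ and $c_5$ in the statement reflect the factors accumulated in the concavity-Poincar\'e and geometric reduction steps.
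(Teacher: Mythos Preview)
Your approach is genuinely different from the paper's: the paper takes $v$ to be the outer normal of the John ellipsoid $E\subset M\subset nE$ at the boundary point $\tau e$, shows $w_M(v)\le n t$ where $t$ is the maximal chord length of $M$ in direction $e$, and then bounds $t$ by first Steiner-symmetrising $M$ in all coordinate directions and passing to a double cone $\widetilde M$, on which a direct normal-angle estimate (the ``Claim'') applies. The angle estimate $\langle e,v\rangle\ge 1-c_5\varepsilon$ is obtained by a second application of the same Claim, again using the John ellipsoid. Your first-moment choice of $v$ and the deficit reformulation are reasonable alternatives, and your angle bound $\langle e,v\rangle\ge 1-O(\varepsilon)$ is essentially correct.

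The width bound, however, has a genuine gap that you yourself flag but do not close. From the area formula you only obtain the \emph{averaged} control $\int_N|\nabla f|\,dy\le C_n\sqrt{\varepsilon}\,\cH^{n-1}(N)$, and $\cH^{n-1}(N)/\cH^{n-1}(B_{n-1}(y_0,r))$ can be arbitrarily large when $N$ is elongated, so you cannot localise to the inball by any Poincar\'e-type argument. The vague appeal to concavity to ``rule out anomalous concentration of $|\nabla f|$'' is not an argument; for instance, $h=f-g$ may be constant (a cylinder), where $\nabla h\equiv 0$ and the width control must come entirely from the side-area term $F_{\text{side}}$, which you discard. More seriously, the step ``convexity of $M$ then extends the slab containment to all of $M$'' is false as stated: a convex body can lie in a thin $v$-slab over the inball of its projection and still have large $w_M(v)$, since the extremal width may be realised far from the inball cylinder. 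Convexity gives you nothing here without re-invoking the global hypothesis. The paper's Steiner-symmetrisation step is precisely what collapses this difficulty: after symmetrisation the body is unconditionally symmetric, the maximal chord $t$ controls \emph{all} chords, and the reduction to a double cone makes the normal-angle estimate uniform over a fixed fraction of the projection, which is exactly what the Claim needs.
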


\noindent{\bf Remark.} The lemma is essentially optimal for $n\ge 3$, in the sense that one cannot conclude in general that
$w_M(e)\leq c_0\, V_{n-1}(M)^{\frac1{n-1}} r \sqrt{\varepsilon}$ with a constant $c_0>0$. To show this, let
$v\in \BS^{n-1}$ with $\langle e,v\rangle=1-\frac{\varepsilon}2$, and let $e_1,\ldots,e_n$ be an orthonormal basis such that $e_1=v$ and
$e\in{\rm lin}\{e_1,e_2\}$. For large $\lambda$, we define
$$
M=\left[\pm\sqrt{\varepsilon} e_1,\pm \lambda e_2,\pm n e_3,\ldots,\pm n e_n\right],
$$
which satisfies
$1<r<n$, $V_{n-1}(M)<c_6\lambda$, $\mathcal{H}^{n-1}(M|e^\bot)\geq (1-\varepsilon) V_{n-1}(M)$ and
$w_M(e)>c_7\lambda\sqrt{\varepsilon}> c_8\lambda^{\frac{n-2}{n-1}} V_{n-1}(M)^{\frac1{n-1}} r \sqrt{\varepsilon}$
with constants $c_6,c_7,c_8>0$ depending only on $n$, if $\lambda>0$ is large and $\varepsilon>0$ is small enough.

\begin{proof}[Proof of Proposition \ref{surfacearea-stability}]
For the proof, we can assume that $M$ is $n$-dimensional. This follows from an approximation argument (which will require adjustments of $M$, $\varepsilon$ and $r$).

Let us consider some $n$-dimensional convex body $L\in\cK^n$ with
$V_{n-1}(L)\leq V_{n-1}(M)$ and
$\mathcal{H}^{n-1}(L\vert e^\bot)=\mathcal{H}^{n-1}(M\vert e^\bot)$, and let
\begin{align*}
\partial_+L=\{y\in\partial' L:\langle e,\nu_L(y)\rangle> 0\},\quad
\partial_-L=\{y\in\partial' L:\langle e,\nu_L(y)\rangle< 0\}.
\end{align*}
We first prove an auxiliary claim which will be applied twice in the subsequent argument.

\bigskip

\noindent {\bf Claim.} Suppose there exist constants $\eta,\gamma>0$ and a compact convex set $C\subset L\vert e^\bot$
with $\mathcal{H}^{n-1}(C)=\gamma\, \mathcal{H}^{n-1}(L\vert e^\bot)$ such that  any
$y\in\partial_+ L$ with $y\vert e^\bot\in C$ satisfies $\tan\angle(e,\nu_L(y))\geq \eta$. Then
\begin{equation}
\label{yCnotM}
\eta\leq \frac{4\sqrt{\varepsilon}}{\sqrt{\gamma}}\,\mbox{ \ provided\, }\varepsilon<\frac{\gamma}{4}.
\end{equation}

To prove the claim,  let $Y$ denote the set of all $y\in\partial_+ L$ with $y\vert e^\bot\in C$.
For any $y\in Y$, we have
$$
0<\langle  e,\nu_L(y) \rangle=\cos \angle(e,\nu_L(y))=\sqrt{\frac1{1+(\tan\angle(e,\nu_L(y)))^2}}\leq \sqrt{\frac1{1+\eta^2}}.
$$
It follows that
\begin{equation}\label{eqna}
\gamma\,\mathcal{H}^{n-1}(L|e^\bot)=\mathcal{H}^{n-1}(C)=
\int_{Y}\vert \langle e,\nu_{L}(y)\rangle|\, \mathcal{H}^{n-1}(dy)\leq
\sqrt{\frac1{1+\eta^2}}\, \mathcal{H}^{n-1}(Y).
\end{equation}
Furthermore, we have
\begin{align}
\left(2-\gamma\right)\mathcal{H}^{n-1}(L\vert e^\bot)&=
\mathcal{H}^{n-1}(L\vert e^\bot)+\mathcal{H}^{n-1}
\left((L\vert e^\bot)\setminus C\right)\nonumber\\
&=
\int_{(\partial' L)\setminus Y}|\langle e,\nu_{L}(y)\rangle|\, \mathcal{H}^{n-1}(dy)\nonumber\\
&\leq \mathcal{H}^{n-1}\left((\partial L)\setminus Y\right).\label{eqnb}
\end{align}
From \eqref{astdplus}, \eqref{eqna} and \eqref{eqnb}, we deduce that
\begin{align*}
(1+2\varepsilon)\, 2\,\mathcal{H}^{n-1}(L\vert e^\bot)&=(1+2\varepsilon)\, 2\,\mathcal{H}^{n-1}(M\vert e^\bot)\\
&\ge (1+2\varepsilon)(1-\varepsilon)\, \mathcal{H}^{n-1}(\partial M)\\
& \ge \mathcal{H}^{n-1}(\partial M)\geq \mathcal{H}^{n-1}(\partial L)\\
&=\mathcal{H}^{n-1}\left((\partial L)\setminus Y\right)+
\mathcal{H}^{n-1}\left(Y\right)\\
&\geq\left(2-\gamma+
\gamma\sqrt{1+\eta^2}\right)\mathcal{H}^{n-1}(L\vert e^\bot),
\end{align*}
and hence $4\varepsilon\geq \gamma(\sqrt{1+\eta^2}-1)$. Since $(1+s)^2\leq 1+4s$ for
$s\in (0,1)$,  we conclude (\ref{yCnotM}), which proves the claim.

\bigskip

We set
$$
t=\max\left\{\mathcal{H}^1((x+\R e)\cap M):\,x\in M|e^\bot \right\}
$$
and write $r$ to denote the maximal radius of $(n-1)$-balls in $M\vert e^\bot$.
Possibly after a translation of $M$, there exists an origin symmetric ellipsoid $E$  such that
\begin{equation}
\label{John}
E\subset M\subset n E
\end{equation}
according to John's theorem. By changing the orientation of $e\in\BS^{n-1}$, if necessary, there is a positive $\tau$ such that $\tau e\in \partial E$
and $\tau\le  \frac{t}{2}\le n\tau$, and hence $\tau\in [\frac{t}{2n},\frac{t}2]$. Let $v\in \mathbb{S}^{n-1}$ be the exterior unit normal at $\tau e$ to $E$.
It follows that
$$
w_M(v)\leq w_{nE}(v)=2nh_E(v)
=2n\langle \tau e,v\rangle=2n\tau\langle e,v\rangle\le 2n\tau,
$$
and thus
\begin{equation}
\label{efE}
w_M(v)\leq nt.
\end{equation}

We prove Proposition~\ref{surfacearea-stability} in two steps. First, we bound $t$ from above, then we establish a lower bound for $|\langle e,v\rangle|$.

\bigskip

\noindent {\bf Step 1.} We show that $t\leq c_9r\sqrt{\varepsilon}$ with a constant
$c_9\le 48n\sqrt{6}^{n-1}$.

Let $w\in \BS^{n-1}\cap e^\bot$ be such that $h_E(w)=\min\{h_E(u):u\in \BS^{n-1}\cap e^\bot\}$, which equals
the inradius of $E\vert e^\perp$, and hence
$h_E(w)\leq r$. In turn, for $y\in M$ we deduce from (\ref{John}) that
$$
|\langle y,w\rangle|\le \max\{h_M(w),h_M(-w)\}\le h_{nE}(w)=nh_E(w)\le nr,
$$
that is,
\begin{equation}
\label{Mwidthr}
|\langle y,w\rangle|\leq nr \mbox{ \ for \ }y\in M.
\end{equation}

We may choose an orthonormal basis $e_1,\ldots,e_n$ of $\R^n$ such that $e_1=e$ and $e_2=w$, and
let $M'$ be the convex body resulting from $M$ via successive Steiner symmetrizations with respect to
 $e_1^\bot,\ldots,e_n^\bot$. Then $M'$ satisfies
$$
|\langle y,w\rangle|\leq nr \mbox{ \ for \ }y\in M',\quad \pm \frac12\,te\in\partial M', \quad
 (rB^n\cap e^\bot)\subset M',
$$
$M'$ is symmetric with respect to the coordinate subspaces  $e_1^\perp,\ldots,e_n^\perp$, and
hence in particular $M'$ is centrally symmetric,
$\mathcal{H}^{n-1}(M'|e^\bot)=\mathcal{H}^{n-1}(M|e^\bot)$,
$M'|e^\bot=M'\cap e^\bot$ and
$V_{n-1}(M')\leq V_{n-1}(M)$. Finally, we consider the double cone
$$
\widetilde{M}={\rm conv}\left\{ \frac12\,te,- \frac12\,te,M'\cap e^\bot\right\},
$$
which satisfies
$$
|\langle y,w\rangle|\leq nr \quad \text{for }y\in \widetilde{M},\quad \pm \frac12\,te\in\partial \widetilde{M}
,\quad (rB^n\cap e^\bot)\subset \widetilde{M},
$$
$\widetilde{M}$ is symmetric with respect to the coordinate subspaces  $e_1^\perp,\ldots,e_n^\perp$ (and hence centrally symmetric),
$\mathcal{H}^{n-1}(\widetilde{M}|e^\bot)=\mathcal{H}^{n-1}(M|e^\bot)$ and
$V_{n-1}(\widetilde{M})\leq V_{n-1}(M)$.

For $\varrho=h_{\widetilde{M}}(w)$, we have $\varrho w\in \partial \widetilde{M}$, and hence
$\varrho\leq nr$. To prepare an application of (\ref{yCnotM}) with $L=\widetilde{M}$, we consider
$$
C=\frac56\,\varrho w+\frac16\,\left(\widetilde{M}\vert e^\bot\right)\subset \widetilde{M}\vert e^\bot
\quad\text{ with }\gamma=6^{-(n-1)}.
$$
Let $U=e^\bot\cap w^\bot$. Let $Y$ denote the set of all $y\in\partial_+\widetilde{M}$ such that $y|e^\perp\in C$. For $y\in Y$, we
set $\alpha=\angle(e,\nu_{\widetilde{M}}(y))$, thus
$y=se+pw+v_1$ and  $\nu_{\widetilde{M}}(y)=e\cos \alpha+qw+v_2$, where
$$
v_1\in \frac16\,\left(\widetilde{M}\cap U\right),\quad v_2\in U
,\quad s\geq 0,\quad \frac23\,\varrho\leq p\leq \varrho
,\quad q\leq \sin\alpha.
$$
Since $\widetilde{M}$ is a double cone, there exists $z\in e^\bot\cap \partial  \widetilde{M}$ such that
$y\in[z,\frac12\,te]$ and $y\vert e^\bot\in[z,o]$. We deduce that
$$
\frac{s}{\frac12\,t}=\frac{\|z-(y\vert e^\bot)\|}{\|z\|}=
\frac{\langle z-(y\vert e^\bot),w\rangle}{\langle z,w\rangle}\leq \frac13,
$$
thus $s\leq \frac16\,t$.
As $\frac12\,te\in \widetilde{M}$, we have
$$
0\leq\left\langle y-\mbox{$\frac12\,te$},\nu_{\widetilde{M}}(y)\right\rangle=\mbox{$(s-\frac12\,t)\cos\alpha$} +pq+\langle v_1,v_2\rangle,
$$
which yields
$$
-\langle v_1,v_2\rangle\leq \mbox{$(s-\frac12\,t)\cos\alpha$}+pq.
$$
Therefore, since $2v_1\in \widetilde{M}$ we obtain
\begin{align*}
0&\leq \langle y-2v_1,\nu_{\widetilde{M}}(y)\rangle=
\mbox{$s\cdot\cos\alpha$} +pq-\langle v_1,v_2\rangle\leq
\mbox{$(2s-\frac12\,t)\cos\alpha$} +2pq\\
&\leq
-\mbox{$\frac16\,t\cos\alpha$} +2\varrho\sin\alpha\leq -\mbox{$\frac16\,t\cos\alpha$} +2nr\sin\alpha,
\end{align*}
which implies that $\tan\alpha\geq \frac{t}{12nr}$. Now an application of (\ref{yCnotM}) proves the estimate of Step 1.

\bigskip

\noindent{\bf Step 2.} We show that $\langle e,v\rangle \geq 1-c_8\varepsilon$.

Let $\beta=\angle (e,v)\in[0,\frac{\pi}2)$, and let $\widetilde{w}\in \BS^{n-1}\cap e^\bot$ be such that $v=e\cos\beta+\widetilde{w}\sin\beta$. Since the shadow boundary of $E$ in direction $e$ lies in a hyperplane and by \cite[Theorem 1]{MR89}, it follows from the definition of $v$ that $E|e^\bot=(E\cap v^\bot)|e^\bot$. Hence
(\ref{John}) yields that
\begin{equation}
\label{efJohn}
(E\cap v^\bot)|e^\bot\subset \pm M|e^\bot\subset n(E\cap v^\bot)|e^\bot.
\end{equation}
Since there is some $a\in e^\perp$ such that $(rB^n\cap e^\perp) +a\subset M|e^\bot$ it follows that
$$\frac{r}{n}(B^n\cap e^\perp)\pm \frac{a}{n}\subset \pm \frac{1}{n}(M\vert e^\perp)\subset E\vert e^\perp,$$
which shows that
$$
\frac{r}n\,(B^n\cap e^\bot)\subset (E\cap v^\bot)|e^\bot,
$$
and we deduce that
\begin{equation}
\label{rholower}
\varrho=\max\left\{\lambda>0:\lambda\,\widetilde{w}\in (E\cap v^\bot)|e^\bot\right\}\geq \frac{r}n.
\end{equation}

In order to apply (\ref{yCnotM}), we now choose $L=M$ and
$$
C=\frac12\,(E|e^\bot)=\frac12(E\cap v^\bot)|e^\bot\subset M\vert e^\bot,
$$
which satisfies
$$
\mathcal{H}^{n-1}(C)=\widetilde{\gamma}\,\mathcal{H}^{n-1}(M|e^\bot),\quad \widetilde{\gamma}\ge (2n)^{-(n-1)},
$$
according to (\ref{efJohn}).

As before, we define $Y$ as the set of all $y\in\partial_+ M$ with $y\vert e^\bot\in X$.
Then, for $y\in Y$ we have
$$
x=y|e^\bot\in  \frac12(E\cap v^\bot)|e^\bot.
$$
It follows that
$$
x-\frac{\varrho}2\,\widetilde{w}\in (E\cap v^\bot)|e^\bot,
$$
and hence $x-\frac{\varrho}2 \widetilde{w}=z|e^\bot$ for a $z\in E\cap v^\bot\subset M$.
In addition, the definition of $t$ implies the existence of an $s\in[0,t]$ such that
$y-se\in E\cap v^\bot$.
Since $v=e\cos\beta+\widetilde{w}\sin\beta$, we deduce that
$$
z-(y-se)=-\frac12\,\varrho\,\widetilde{w}+ \frac12\,\varrho(\tan\beta)\, e,
$$
thus
$$
y-z=\frac12\,\varrho\,\widetilde{w}+ \left(s-\frac12\,\varrho\tan\beta\right)\,e.
$$
We set  $\alpha=\angle(e,\nu_M(y))\in [0,\frac{\pi}2)$, and hence
$$
\nu_M(y)=(\cos\alpha)e+\widetilde{p}\,\widetilde{w}+\widetilde{v},
$$
where  $\widetilde{v}\in \left({\rm lin}\{\widetilde{w},e\}\right)^\bot$ and
$\widetilde{p}^{\,2}+\|\widetilde{v}\|^2=(\sin\alpha)^2$.

We deduce from Step~1, $0\leq s\leq t$ and $\varrho\geq r/n$ (compare (\ref{rholower})) that
\begin{align*}
0&\leq \langle y-z,\nu_M(y)\rangle=
\left\langle \frac12\,\varrho\,\widetilde{w}+  \left(s-\frac12\,\varrho\tan\beta\right)e,(\cos\alpha)e+\widetilde{p}\,\widetilde{w}+\widetilde{v}\right\rangle\\
&=\left(s-\frac12\varrho\tan\beta\right)\cos\alpha +\frac{\widetilde{p}}2\,\varrho\\
&\leq
\left(c_9n\varrho\sqrt{\varepsilon}-\frac12\varrho\tan\beta\right)\cos\alpha +\frac{1}{2}\sin\alpha\,\varrho,
\end{align*}
thus $\tan\alpha\geq \tan\beta-2c_9 n\sqrt{\varepsilon}$. If $\tan\beta-2c_9 n\sqrt{\varepsilon}>0$, we conclude from (\ref{yCnotM})
that
$$
\tan\beta-2c_9 n\sqrt{\varepsilon}\leq 4(2n)^{\frac{n-1}2}\sqrt{\varepsilon},
$$
which in turn yields $\beta\leq\tan\beta\leq c_{10}\sqrt{\varepsilon}$
with $c_{10}\le 96 n^2\sqrt{6}^{n-1}+4\sqrt{2n}^{n-1}\le (10 n)^2\sqrt{2n}^n$.
If $\tan\beta\le 2c_9 n\sqrt{\varepsilon}$, we directly arrive at the same conclusion.
Therefore, in any case $\langle e,v\rangle=\cos\beta\geq 1-\frac12\, c_{10}^2\varepsilon$, completing Step~2.
\end{proof}

Finally, we combine the stability estimates above to derive  Theorem~\ref{mvolupperstab} in the following form.

\begin{theorem}\label{stabreverseMI}
Let $K,M\in\cK^n$ with $\dim(K)\ge 1$ and $\dim(M)\ge n-1$. Suppose that
\begin{equation}\label{eqbneubb}
V(K,M[n-1])\ge (1-\varepsilon) \frac{1}{n}V_1(K)V_{n-1}(M)
\end{equation}
for some  $\varepsilon\in [0,\varepsilon_0]$. Then there are a segment $S$ of length $(2-c_{11}\varepsilon^2)$ parallel to $e\in \BS^{n-1}$
such that $R(K)S\subset K\subset R(K)(S+c_{12}\sqrt{\varepsilon}B^{n-1}_S)$, and there is a vector $v\in\BS^{n-1}$ such that $\langle e,v\rangle \ge 1-c_{13}\sqrt{\varepsilon}$
and $w_M(v)\le c_{14}\,r\,\varepsilon^{\frac{1}{4}}$, where $r$ is the maximal radius of $(n-1)$-balls in $M\vert e^\perp$ and $\varepsilon_0,c_{11},\ldots,c_{14}>0$ are constants
depending on $n$.
\end{theorem}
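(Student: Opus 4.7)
The plan is to revisit the chain
$$
V(K,M[n-1])\le \frac{R(K)}{n}F(M)=\frac{2R(K)}{n}V_{n-1}(M)\le\frac{V_1(K)}{n}V_{n-1}(M)
$$
from the proof of Theorem~\ref{mvolupper} and show that under the hypothesis both inequalities are nearly tight. The second inequality is just $V_1(K)\ge 2R(K)$ from Theorem~\ref{thmbkr}(i); combining it with (\ref{eqbneubb}) yields $V_1(K)(1-\varepsilon)\le 2R(K)$, hence $V_1(K)\le(2+4\varepsilon)R(K)$ for $\varepsilon\le \frac12$. Then Theorem~\ref{thmbkr}(ii) provides, after a suitable translation of $K$, a direction $e\in\BS^{n-1}$ and a segment $S$ of length $(2-c_{11}\varepsilon^2)$ parallel to $e$ with $R(K)S\subset K\subset R(K)(S+c_{12}\sqrt{\varepsilon}B^{n-1}_S)$; this already furnishes the first conclusion.

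Since $V(K,M[n-1])$ is invariant under translations of $K$, we may place $S$ centred at the origin. Then $h_S(u)=(1-c_{11}\varepsilon^2/2)|\langle u,e\rangle|\le |\langle u,e\rangle|$ and $h_{B^{n-1}_S}(u)=\sqrt{1-\langle u,e\rangle^2}\le 1$, which gives the pointwise bound $h_K(u)\le R(K)(|\langle u,e\rangle|+c_{12}\sqrt{\varepsilon})$ for every $u\in\BS^{n-1}$. Integrating against $S_{n-1}(M,\cdot)$ and using $F(M)=2V_{n-1}(M)$ yields
$$
V(K,M[n-1])\le \frac{R(K)}{n}\left(\int_{\BS^{n-1}}|\langle u,e\rangle|\,S_{n-1}(M,du)+2c_{12}\sqrt{\varepsilon}\,V_{n-1}(M)\right).
$$
Combining this with the lower bound $V(K,M[n-1])\ge(1-\varepsilon)\frac{2R(K)}{n}V_{n-1}(M)$, which follows from the hypothesis together with $V_1(K)\ge 2R(K)$, and rearranging, we obtain
$$
\int_{\BS^{n-1}}|\langle u,e\rangle|\,S_{n-1}(M,du)\ge(1-C\sqrt{\varepsilon})\,2V_{n-1}(M)
$$
for a constant $C=C(n)$.

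Finally, Proposition~\ref{surfacearea-stability} applied with parameter $C\sqrt{\varepsilon}$ (which requires shrinking $\varepsilon_0$ to ensure $C\sqrt{\varepsilon}$ lies in the admissible range of the proposition) supplies $v\in\BS^{n-1}$ satisfying $w_M(v)\le c_4\, r\sqrt{C\sqrt{\varepsilon}}=c_{14}\, r\,\varepsilon^{1/4}$ and $\langle e,v\rangle\ge 1-c_5C\sqrt{\varepsilon}=1-c_{13}\sqrt{\varepsilon}$, after flipping the sign of $e$ if necessary so that $\langle e,v\rangle>0$ (the integral hypothesis is invariant under $e\mapsto -e$). The argument is essentially bookkeeping; the only delicate point is translating $K$ so that the inequality $h_K(u)\le R(K)(|\langle u,e\rangle|+c_{12}\sqrt{\varepsilon})$ is sharp enough to transmit the $\varepsilon$-closeness to the integral hypothesis of Proposition~\ref{surfacearea-stability}. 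The order $\varepsilon^{1/4}$ in the estimate on $w_M(v)$ is inherited from the square root in Proposition~\ref{surfacearea-stability} being applied to a parameter already of size $\sqrt{\varepsilon}$.
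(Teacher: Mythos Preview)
Your proof is correct and follows essentially the same approach as the paper's: first use the chain of inequalities together with Theorem~\ref{thmbkr}(ii) to locate the segment $S$, then feed the resulting support-function bound on $K$ back into the hypothesis to obtain the integral condition $\int_{\BS^{n-1}}|\langle u,e\rangle|\,S_{n-1}(M,du)\ge(1-C\sqrt{\varepsilon})\,2V_{n-1}(M)$, and finally apply Proposition~\ref{surfacearea-stability}. The only cosmetic differences are that the paper uses $V_1(K)\ge V_1(S)R(K)$ rather than $V_1(K)\ge 2R(K)$ for the lower bound (both work), and your remark about flipping the sign of $e$ is unnecessary since Proposition~\ref{surfacearea-stability} already yields $\langle e,v\rangle\ge 1-c_5\varepsilon>0$.
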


\begin{proof}
Suppose that \eqref{eqbneubb} is satisfied for some $\varepsilon\in [0,\varepsilon_0]$, where $\varepsilon_0\in (0,\frac{1}{2})$ is sufficiently small.
A combination of \eqref{astplus} and \eqref{eqbneubb} yields
$$
\frac{1}{n} R(K) F(M)\ge V(K,M[n-1])\ge (1-\varepsilon)\frac{1}{n} V_1(K) V_{n-1}(M),
$$
and hence
$$
V_1(K)\le \frac{1}{1-\varepsilon} 2R(K)\le (1+2\varepsilon)2 R(K)=(2+4\varepsilon)R(K).
$$
By Theorem \ref{thmbkr}, there is a segment $S$ of length $2-c_{15} \varepsilon^2$ such that $R(K)S\subset K\subset R(K)(S+c_{16}\sqrt{\varepsilon} B^{n-1}_S)$ (a translation vector can be absorbed into $S$).
In particular,
$$
V_1(K)\ge V_1(S) R(K)\ge (2-c_{15}\varepsilon^2)R(K).
$$
If $S$ has direction $e\in \mathbb{S}^{n-1}$, then again by the assumption we obtain
\begin{align*}
&(1-\varepsilon)(2-c_{15}\varepsilon^2)\frac{1}{n}R(K)V_{n-1}(M)\\
&\le (1-\varepsilon)\frac{1}{n}V_1(K)V_{n-1}(M)\\
&\le V(K,M[n-1])=\frac{1}{n}\int_{\mathbb{S}^{n-1}}h_K(u)\, S_{n-1}(M,du)\\
&\le \frac{1}{n}\int_{\mathbb{S}^{n-1}}\left(h_S(u)+c_{16}\sqrt{\varepsilon}\right)R(K)\, S_{n-1}(M,du)\\
&=\frac{R(K)}{n}\frac{1}{2}V_1(S)\int_{\mathbb{S}^{n-1}}|\langle u,e\rangle|\, S_{n-1}(M,du)+c_{16}\sqrt{\varepsilon}\frac{R(K)}{n}2V_{n-1}(M),
\end{align*}
and hence
\begin{align*}
\int_{\mathbb{S}^{n-1}}|\langle u,e\rangle|\, S_{n-1}(M,du)&\ge (1-\varepsilon)2V_{n-1}(M)-c_{16}\frac{2\sqrt{\varepsilon}}{2-c_{15}\varepsilon^2}2V_{n-1}(M)\\
&\ge 2V_{n-1}(M)(1-c_{17}\sqrt{\varepsilon}).
\end{align*}
An application of Proposition \ref{surfacearea-stability} completes the proof.
\end{proof}


\end{document}